\DeclareFontFamily{OT1}{pzc}{}
\DeclareFontShape{OT1}{pzc}{m}{it}{<-> s * [1.10] pzcmi7t}{}
\DeclareMathAlphabet{\mathpzc}{OT1}{pzc}{m}{it}
\pgfplotsset{compat=1.15}
\crefname{hypothesis}{Hypothesis}{Hypotheses}
\title{Learned infinite elements\thanks{Submitted to the editors DATE.
}}
\author{Thorsten Hohage\thanks{Institut f\"ur Numerische und Angewandte Mathematik, Universit\"at G\"ottingen and 
   Max-Planck-Institut f\"ur Sonnensystemforschung, G\"ottingen, 
  (\email{hohage@math.uni-goettingen.de}).}
\and Christoph Lehrenfeld\thanks{Institut f\"ur Numerische und Angewandte Mathematik, Universit\"at G\"ottingen,
  (\email{{lehrenfeld@math.uni-goettingen.de}).}    }
  \and Janosch Preuss\thanks{Institut f\"ur Numerische und Angewandte Mathematik, Universit\"at G\"ottingen and 
  Max-Planck-Institut f\"ur Sonnensystemforschung, G\"ottingen, 
  (\email{j.preuss@math.uni-goettingen.de}).}
  }
\newcommand{\DtN}{\mathpzc{DtN}}
\DeclareMathOperator{\dtn}{\mathpzc{dtn}}
\DeclareMathOperator*{\argmin}{argmin}
\DeclareMathOperator{\Id}{Id}
\DeclareMathOperator{\Div}{div}
\newcommand{\dDtN}{\operatorname{DtN}}
\newcommand{\ddtn}{\operatorname{dtn}}
\newcommand{\calA}{\mathpzc{A}}
\newcommand{\calB}{\mathpzc{B}}
\newcommand{\calL}{\mathpzc{L}}
\newcommand{\mass}{M}
\newcommand{\massr}{\mathcal{B}}
\newcommand{\dmassr}{B}   
\newcommand{\stiff}{K}
\newcommand{\stiffr}{\mathcal{A}}
\newcommand{\dstiffr}{A}
\newcommand{\massFEM}{M} 
\newcommand{\stiffFEM}{K} 
\newcommand{\FEshape}{\phi} 
\newcommand{\fh}{\underline{f}}
\newcommand{\uh}{\underline{u}}
\newcommand{\vh}{\underline{v}}
\newcommand{\wh}{\underline{w}}
\newcommand{\dof}{DOF}
\newcommand{\dofs}{DOFs}
\newcommand{\RJump}{R_{\mathrm{J}}} 
\newcommand{\RScatter}{R_{\mathrm{s}}} 
\newcommand{\RVALC}{R_{\mathrm{V}}} 
\newcommand{\wavenr}{k} 
\newcommand{\colora}[1]{\textcolor{red}{#1}}
\newcommand{\colorb}[1]{\textcolor{blue}{#1}}
\begin{document}

\maketitle

\begin{abstract}
We study the numerical solution of scalar time-harmonic wave equations on unbounded domains which can be split into a bounded 
interior domain of primary interest and an exterior domain with separable geometry. To compute the solution in the interior domain, 
approximations to the Dirichlet-to-Neumann (DtN) map of the exterior domain have to be imposed as transparent boundary conditions 
on the artificial coupling boundary. Although the DtN map can be computed by separation of variables, it is a nonlocal operator with 
dense matrix representations, and hence computationally inefficient. Therefore, approximations of DtN maps 
by sparse matrices, usually involving 
additional degrees of freedom, have been studied intensively in the literature using a variety of approaches including different types of infinite elements, local non-reflecting boundary conditions, and perfectly matched layers. The entries of these sparse matrices are derived analytically, e.g.\ from transformations or asymptotic expansions of solutions to the differential equation in the exterior domain. 
In contrast, in this paper we propose to `learn' the matrix entries from the DtN map in its separated form by solving an optimization 
problem as a preprocessing step.  
Theoretical considerations suggest that the approximation quality of learned infinite elements improves exponentially with increasing 
number of infinite element degrees of freedom, which is confirmed in numerical experiments. 
These numerical studies also show that learned infinite elements outperform 
state-of-the-art methods for the Helmholtz equation. At the same time, learned infinite elements are much more flexible than traditional methods as they, e.g.,  work similarly well for exterior domains involving strong reflections.
As main motivating example we study the atmosphere of the Sun, which is strongly inhomogeneous and exhibits reflections at the corona.
\end{abstract}

\begin{keywords}
  transparent boundary conditions, Dirichlet-to-Neumann map, helioseismology, learning, infinite elements, rational approximation, 
  Helmholtz equation
\end{keywords}

\begin{AMS}
   	65N30, 35L05, 35J05, 33C10, 85-08
\end{AMS}

\section{Introduction}

To treat time-harmonic wave equations posed on unbounded domains with a numerical discretization, an artificial boundary is typically introduced 
to obtain a finite computational domain.
On this artificial boundary appropriate boundary conditions need to be imposed to ensure that the discrete solution on the computational domain 
is a good approximation of the restriction of the true solution on the unbounded domain.
Such conditions are called absorbing or transparent boundary conditions. 
A large variety of transparent boundary conditions is available in the literature, see
\cite{hagstrom:99,G99}.
As one class of such methods, infinite elements have been devised and analyzed since the 1970s
for a variety of problems. We only refer to \cite{U73,B77,DG:98,A00,HN09}. 
As an alternative, (low order) \emph{local} non-reflecting boundary conditions have been proposed since the end of the 1970s, see \cite{EM:79,BT:80} for two prominent examples. Later also high-order local non-reflecting boundary conditions using auxiliary variables were developed, 
see \cite{G04,G98,G02}.
Finally we mention -- as probably the most prominent candidate -- Perfectly Matched Layers (PML) \cite{B94}. 
\par 

Despite the wide variety of approaches, research on transparent boundary conditions is far from finished since for many important problems established methods encounter severe difficulties. 
In particular, this work has been initiated by challenges in the simulation of acoustic waves 
in computational helioseismology \cite{GB05,LBC10}.
Helioseismology studies the solar interior by analyzing oscillation data measured at the visible solar surface (photosphere).
Sound speed and density are strongly varying (nonanalytic) functions in the solar atmosphere (see \cref{fig:c_rho_VALC}) rendering perfectly matched layers, classical infinite elements or approaches based on Green's functions inapplicable.
State-of-the-art local transparent boundary conditions (\cite{FL17,BC18,BFP19}) 
based on strongly simplified models of the 
solar atmosphere and their limitations will be discussed in \cref{sec:helio_models}.
\par 

All approaches discussed above are derived from analytical studies of the differential equation in 
the exterior domain. In this paper we will access the exterior differential equation only implicitly 
via the associated DtN map on the coupling boundary. 
We propose to `learn' infinite elements from this DtN map in a preprocessing step 
in situations where the DtN map can be computed by separation of variables. 
To outline the main ideas, let $\Gamma$ 
denote the coupling boundary between 
the interior and the exterior domain, let $\Delta_{\Gamma}$ be the Laplace-Beltrami operator on $\Gamma$, and 
suppose that the exterior PDE is separable with respect to eigenpairs $(\lambda_{\ell}, v_{\ell})$ of $-\Delta_\Gamma$ where 
$\{v_{\ell}:\ell\in\mathbb{N}\}$ forms a complete orthonormal system of $L^2(\Gamma)$. 
Then the DtN map can be written as a functional calculus
\[
\DtN u_0 =  \dtn(-\Delta_\Gamma)u_0 = \sum\nolimits_{\ell=1}^\infty \dtn(\lambda_\ell) \langle u_0,v_\ell\rangle v_\ell,
\]
where the complex-valued function $\dtn$ is initially defined on the spectrum of $-\Delta_\Gamma$ by quotients of 
derivatives and function values of separated ordinary differential equations. 
This formula, which will be rederived in section \cref{ssec:dtnmaps_framework}, can be found in many places (see, e.g., section 3.2 of \cite{FI98}), although the interpretation as a functional calculus is less common.
It turns out that $\dtn$ has a natural analytic extension to a neighborhood in the complex plane. 

In a nutshell, learned infinite elements take the algebraic structure of  classical infinite element discretizations as in \cite{A00,DG:98} 
and optimize over the infinite element matrix entries such that the Schur complement of these matrix entries 
provides the best possible approximation to the Dirichlet-to-Neumann map $\DtN$. Here and in the following by Schur complement we always
mean the Schur complement restricted to the boundary degrees of freedom. 
Since we optimize the Schur complement of the infinite element matrix entries, the shape functions corresponding to these matrix 
entries become irrelevant. In contrast to classical infinite elements, our approach only provides an approximation of the interior solution, but no approximation of the exterior solution at all. 
(Note that shape functions of different types of infinite elements would provide different values of the exterior solution!) 
If the solution in the exterior domain is required as well, it can be evaluated in a postprocessing step using 
integral representations and fast summation techniques (see, e.g., \cite{GD:05}). For a given accuracy, this may be more efficient than a direct approximation 
by classical infinite elements since a smaller computational domain and less exterior degrees of freedom are needed. 

In this paper we study the situation that the computational domain is surrounded by a layer of infinite elements in tensor product form. 
Note that a layer of tensor product finite elements leads to the same algebraic structure of the system matrix, and also the case of 
a finite number of tensor product finite element layers, which occurs in particular in tensor product PML discretizations, 
is a special case of this algebraic structure. We will show in \cref{sec:derivation} and \cref{section:tensor_product_discr} that the Schur complement of 
the exterior degrees of freedom in system matrices of this structure can be expressed in terms of the discrete version
$-\Delta_{\Gamma,h}$ of the Laplace-Beltrami operator on $\Gamma$ corresponding to the finite element discretization of the boundary $\Gamma$.
More precisely, if $(\underline{\lambda}_{\ell},\vh_{\ell})$ are eigenpairs of $-\Delta_{\Gamma,h}$ which are orthonormal in 
the discrete approximation $\langle \cdot, \cdot \rangle_h$ of the $L^2$ inner product, we show that that the Schur complement 
is described by a discrete DtN map of the form
%
\[
\dDtN\,\uh_0 = \ddtn(-\Delta_{\Gamma,h})\uh_0
= \sum\nolimits_{\ell} \ddtn(\underline{\lambda}_{\ell})
\langle \uh_0,\vh_\ell\rangle_h \vh_{\ell}
\]
with a complex-valued rational function $\ddtn$.

Therefore, we end up with a rational approximation problem. More precisely, we have to find a function 
$\ddtn(\lambda)=p(\lambda)/q(\lambda)$ with 
complex polynomials $p$ and $q$ of degrees $N+1$ and $N$, respectively, that best approximates the function $\dtn$ with respect to some norm. 
Here $N$ is the number of exterior degrees of freedom per degree of freedom on the boundary, and $\ddtn$ is defined in terms of the  
$(N+1)\times (N+1)$ components of the local system matrices of the infinite elements, over which we optimize.  
After a reduction step the number of unknowns grows only linearly with $N$. 
Due to the meromorphic extension property of the function $\dtn$ we can appeal to results in rational 
approximation theory to show that the approximation error decreases exponentially in $N$, at least on bounded intervals. 

While the approach can easily be generalized to most other discretization schemes, in this article we will consider the finite element method as the underlying discretization of the interior domain for ease of presentation.



The remainder of this paper is structured as follows.
In \cref{sec:derivation} we introduce the concept of learned infinite elements starting from a discrete algebraic point of view. A crucial assumption used in that section and throughout the whole paper is that the discrete differential operator associated to the exterior domain problem can be written in tensor product form. This requires that the same can be done at the continuous level. 
A corresponding class of PDE problems is discussed in \cref{sec:dtnmaps}, and several examples are given.
The \cref{sec:approx_DtN} is devoted to the approximation problem between $ \dtn$ and $\ddtn$.
Theoretical considerations suggest that even when the number of parameters in the ansatz for $\ddtn$ arising from learned infinite elements is reduced to $O(N)$ the approximation should converge exponentially with increasing number $N$ of infinite element degrees of freedom. 
In \cref{section:tensor_product_discr} we turn our attention to (finite element based) discretizations with tensor product form of the exterior domain including the generic form of learned infinite elements proposed in this paper.
A variety of numerical experiments presented in \cref{sec:numexp} corroborates this finding. 
Both homogeneous problems which allow for a comparison with established transparent boundary conditions and first results for helioseismology are presented.
The paper ends with a conclusion and outlook towards further research.


\section{Tensor product Learned Infinite Elements}\label{sec:derivation}
In this section we derive the learned infinite element approach at an algebraic discrete level.

\subsection{Problem description}
Suppose we have a finite element discretization of a linear PDE with a vector $\uh_I$ of degrees of freedom (\dofs) in the 
interior of the domain $\Omega_{\mathrm{int}} \subset \mathbb{R}^d,~d\in\mathbb{N}$ and a vector $\uh_\Gamma$ of \dofs~on some boundary $\Gamma\subset \partial \Omega_{\mathrm{int}}$.
As a partitioned linear system we obtain
\[
\begin{bmatrix} L_{II} & L_{I\Gamma}\\ L_{\Gamma I}& L_{\Gamma\Gamma}^{\mathrm{int}}\end{bmatrix}
\begin{bmatrix}\uh_I \\ \uh_\Gamma\end{bmatrix}
= \begin{bmatrix}\fh_I \\ \fh_\Gamma^{\mathrm{int}} + \fh_\Gamma^{\mathrm{N}}\end{bmatrix}.
\]
Here $L_{\Gamma\Gamma}^{\mathrm{int}}$ and $\fh_{\Gamma}^{\mathrm{int}}$ represent couplings and sources stemming from the interior discretization, especially volume terms from $\Omega_{\mathrm{int}}$ close 
to $\Gamma$. $\fh_\Gamma^{\mathrm{N}}=\mass \vh^{\mathrm{N}}$
where $\vh^{\mathrm{N}}$ represents the Neumann data (or more generally the right hand side of the natural boundary condition, but we will assume that both coincide) and
$\mass$ is the mass matrix of the finite element discretization on $\Gamma$, i.e.\ $\mass_{i j} = \int_{ \Gamma }{  \FEshape_{i}  \FEshape_{j} \, \text{d} \hat{x}  }$ and $\FEshape_i$ denote the shape functions on $\Gamma$ such that $\uh_\Gamma$ represents the function $\sum_i (\uh_\Gamma)_i\FEshape_i$. 

In this paper we only study the solution of PDEs on unbounded domains $\Omega\subset\mathbb{R}^d$ 
so that the Neumann data $\vh^{\mathrm{N}}$ is not given but is rather the result of the solution of a linear, homogeneous \emph{exterior} problem on $\Omega_{\mathrm{ext}} = \Omega\setminus \overline{\Omega_{\mathrm{int}}}$ for the Dirichlet data provided by 
$\uh_\Gamma$. 
The one-to-one relation between $\uh_\Gamma$ and $\vh^{\mathrm{N}}$ is encoded in the Dirichlet-to-Neumann operator $\dDtN^{\mathrm{ext}}$ ($\vh^{\mathrm{N}} = - \dDtN^{\mathrm{ext}} \uh_\Gamma$), a tool that plays an important role
also in other domain decomposition methods in the quest for appropriate transmission conditions \cite{TW05}.
The $\dDtN^{\mathrm{ext}}$ operator is a dense matrix and allows to write the problem as 
\[
\begin{bmatrix} L_{II} & L_{I\Gamma}\\ L_{\Gamma I}& L_{\Gamma\Gamma}^{\mathrm{int}}+ M\dDtN^{\mathrm{ext}}\end{bmatrix}
\begin{bmatrix}\uh_I \\ \uh_\Gamma\end{bmatrix}
= \begin{bmatrix}\fh_I \\ \fh_\Gamma^{\mathrm{int}}\end{bmatrix}.
\]
Since the use of the full matrix $\dDtN^{\mathrm{ext}}$ may be computationally inefficient, we wish 
to approximate this system of equations by a sparse system involving additional degrees of freedom $\uh_E$:
\[
\begin{bmatrix} L_{II} & L_{I\Gamma} &0\\ L_{\Gamma I}& L_{\Gamma\Gamma}^{\mathrm{int}}+L_{\Gamma\Gamma} & L_{\Gamma E} \\
0 & L_{E\Gamma} & L_{EE}\end{bmatrix}
\begin{bmatrix}\uh_I \\ \uh_\Gamma \\ \uh_E\end{bmatrix}
= \begin{bmatrix}\fh_I \\ \fh_\Gamma \\ 0\end{bmatrix}.
\]
In other words, we wish to find sparse matrices $L_{\Gamma \Gamma}$, $L_{\Gamma E}$, $L_{E\Gamma}$ and $L_{EE}$ of small size 
such that the Schur complement of $L_{EE}$ approximates $M\dDtN^{\mathrm{ext}}$:
\begin{equation}\label{def:discreteDtN}
\dDtN^{\mathrm{ext}} \stackrel{!}{\approx} \dDtN \qquad \mbox{with}\qquad 
\dDtN:= M^{-1}\left(L_{\Gamma \Gamma} - L_{\Gamma E}L_{EE}^{-1}L_{E\Gamma}\right).
\end{equation}

\subsection{Tensor product ansatz}
In analogy to the mass matrix on $\Gamma$, let us introduce the stiffness matrix 
$\stiff = (\int_{ \Gamma }{   \nabla_{\Gamma} \FEshape_{i}  \nabla_{\Gamma} \FEshape_{j}    \,   \text{d} \hat{x}  })_{ij}$ 
on the boundary $\Gamma$ so that the discrete Laplace-Beltrami operator on $\Gamma$ is $-\Delta_{\Gamma,h} = M^{-1} K$. Moreover, 
recall that the Kronecker (or tensor) product of an $m \times n$ matrix $A$ and a $p \times q$ matrix $B$  is the $mp \times nq$ block matrix given by 
\begin{equation*}
A \otimes B  =
 \begin{bmatrix}
  a_{11} B &  \ldots &   a_{1n} B \\
   \vdots    &  \ddots &   \vdots  \\
 a_{m1} B   &  \ldots &   a_{mn} B \\
\end{bmatrix}.
\end{equation*} 
In this paper we will study the tensor product ansatz
\begin{align}\label{eq:ansatz}
\begin{bmatrix}  L_{\Gamma\Gamma} & L_{\Gamma E} \\ L_{E\Gamma} & L_{EE}\end{bmatrix}
= \dstiffr \otimes \mass + \dmassr\otimes \stiff 
= \begin{bmatrix}\dstiffr_{\Gamma\Gamma} & \dstiffr_{\Gamma E}\\ \dstiffr_{E\Gamma}& \dstiffr_{EE}\end{bmatrix}\otimes \mass 
+ \begin{bmatrix}\dmassr_{\Gamma\Gamma} & \dmassr_{\Gamma E}\\ \dmassr_{E\Gamma}& \dmassr_{EE}\end{bmatrix}\otimes \stiff 
\end{align}
with matrices $\dstiffr,\dmassr \in \mathbb{C}^{(N+1)\times (N+1)}$ and $\dstiffr_{\Gamma\Gamma},\dmassr_{\Gamma\Gamma}\in\mathbb{C}$.  
Here $N$ should be small, say of the order of $10$. This ansatz is motivated by the fact that many successful transparent
boundary conditions have this algebraic structure as detailed in \cref{section:tensor_product_discr}. 

In the following we will study for which class of matrices $\dDtN^{\mathrm{ext}}$ this ansatz can be successful. 
It has the advantage that only the small matrices $\dstiffr$ and $\dmassr$ have to be `learned'. 
Of course, we could use more general matrices $L_{\Gamma \Gamma}$, $L_{\Gamma E}$, $L_{E\Gamma}$ and $L_{EE}$ with more 
free parameters. This would enable the approximation of more general matrices $\dDtN^{\mathrm{ext}}$, 
but for the examples we have in mind the ansatz \eqref{eq:ansatz} turns out to be successful. 

\subsection{Feasibility analysis}
The following result analyzes the eigenvectors and eigenvalues of $\dDtN$. 
\begin{proposition}\label{prop:1}
If $(\underline{\lambda}_{\ell},\vh_{\ell})$ is an eigenpair of the discrete Laplace-Beltrami operator 
$-\Delta_{\Gamma,h} = M^{-1}K$ on $\Gamma$, then 
\[
\dDtN\vh_\ell = \ddtn(\underline{\lambda}_{\ell}) \vh_{\ell}
\]
with the rational function $\ddtn$ defined, for $\lambda\in\mathbb{C}$ by
\begin{align}\label{eq:dtn_fct_discrete}
\ddtn(\lambda):= \dstiffr_{\Gamma\Gamma}+\lambda\dmassr_{\Gamma\Gamma}
-   (\dstiffr_{\Gamma E}+\lambda\dmassr_{\Gamma E}) (\dstiffr_{EE}+\lambda\dmassr_{EE})^{-1}
(\dstiffr_{E\Gamma}+\lambda\dmassr_{E\Gamma}).   
\end{align}
\end{proposition}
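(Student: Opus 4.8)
The plan is to verify the claim by a direct computation exploiting the block structure of the tensor-product ansatz \eqref{eq:ansatz} together with the mixed-product property $(P\otimes Q)(R\otimes S)=(PR)\otimes(QS)$ of the Kronecker product. First I would unfold \eqref{eq:ansatz} into its four blocks, reading off
\[
L_{\Gamma\Gamma}=\dstiffr_{\Gamma\Gamma}\mass+\dmassr_{\Gamma\Gamma}\stiff,\qquad
L_{\Gamma E}=\dstiffr_{\Gamma E}\otimes\mass+\dmassr_{\Gamma E}\otimes\stiff,
\]
and analogously $L_{E\Gamma}=\dstiffr_{E\Gamma}\otimes\mass+\dmassr_{E\Gamma}\otimes\stiff$ and $L_{EE}=\dstiffr_{EE}\otimes\mass+\dmassr_{EE}\otimes\stiff$, where $\dstiffr_{\Gamma E}\in\mathbb{C}^{1\times N}$, $\dstiffr_{E\Gamma}\in\mathbb{C}^{N\times 1}$ and $\dstiffr_{EE}\in\mathbb{C}^{N\times N}$ are the blocks of $\dstiffr$ (similarly for $\dmassr$). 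The algebraic fact I would use everywhere is the generalized eigenvalue relation $\stiff\vh_\ell=\underline{\lambda}_{\ell}\mass\vh_\ell$, which is equivalent to $-\Delta_{\Gamma,h}\vh_\ell=\underline{\lambda}_{\ell}\vh_\ell$ and lets me replace each occurrence of $\stiff\vh_\ell$ by $\underline{\lambda}_{\ell}\mass\vh_\ell$.

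Next I would evaluate the three matrix applications in \eqref{def:discreteDtN} in turn. Applying $L_{\Gamma\Gamma}$ to $\vh_\ell$ gives $L_{\Gamma\Gamma}\vh_\ell=(\dstiffr_{\Gamma\Gamma}+\underline{\lambda}_{\ell}\dmassr_{\Gamma\Gamma})\mass\vh_\ell$, so that $\mass^{-1}L_{\Gamma\Gamma}\vh_\ell=(\dstiffr_{\Gamma\Gamma}+\underline{\lambda}_{\ell}\dmassr_{\Gamma\Gamma})\vh_\ell$, which is the first term of $\ddtn(\underline{\lambda}_{\ell})$. For the Schur part I would write $\vh_\ell=1\otimes\vh_\ell$ and apply the mixed-product rule to obtain $L_{E\Gamma}\vh_\ell=(\dstiffr_{E\Gamma}+\underline{\lambda}_{\ell}\dmassr_{E\Gamma})\otimes(\mass\vh_\ell)$. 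The crucial observation is then that for any $\wh\in\mathbb{C}^N$ one has $L_{EE}(\wh\otimes\vh_\ell)=[(\dstiffr_{EE}+\underline{\lambda}_{\ell}\dmassr_{EE})\wh]\otimes(\mass\vh_\ell)$, whose second tensor factor $\mass\vh_\ell$ coincides with that of $L_{E\Gamma}\vh_\ell$. Hence the ansatz $\uh_E=\wh\otimes\vh_\ell$ reduces the system $L_{EE}\uh_E=L_{E\Gamma}\vh_\ell$ to the small $N\times N$ system $(\dstiffr_{EE}+\underline{\lambda}_{\ell}\dmassr_{EE})\wh=\dstiffr_{E\Gamma}+\underline{\lambda}_{\ell}\dmassr_{E\Gamma}$, with solution $\wh=(\dstiffr_{EE}+\underline{\lambda}_{\ell}\dmassr_{EE})^{-1}(\dstiffr_{E\Gamma}+\underline{\lambda}_{\ell}\dmassr_{E\Gamma})$.

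To close, I would apply $L_{\Gamma E}$ to $\wh\otimes\vh_\ell$; since $\dstiffr_{\Gamma E}\wh$ and $\dmassr_{\Gamma E}\wh$ are scalars, the mixed-product rule and the eigenvalue relation give $L_{\Gamma E}(\wh\otimes\vh_\ell)=[(\dstiffr_{\Gamma E}+\underline{\lambda}_{\ell}\dmassr_{\Gamma E})\wh]\,\mass\vh_\ell$. Multiplying by $\mass^{-1}$ and substituting the expression for $\wh$ yields $\mass^{-1}L_{\Gamma E}L_{EE}^{-1}L_{E\Gamma}\vh_\ell=[(\dstiffr_{\Gamma E}+\underline{\lambda}_{\ell}\dmassr_{\Gamma E})(\dstiffr_{EE}+\underline{\lambda}_{\ell}\dmassr_{EE})^{-1}(\dstiffr_{E\Gamma}+\underline{\lambda}_{\ell}\dmassr_{E\Gamma})]\vh_\ell$. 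Subtracting this from the first term and comparing with \eqref{eq:dtn_fct_discrete} shows $\dDtN\vh_\ell=\ddtn(\underline{\lambda}_{\ell})\vh_\ell$, as claimed.

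I expect no genuine analytic difficulty here: the main obstacle is purely the Kronecker-product bookkeeping, in particular keeping careful track of which tensor factor carries the mass matrix $\mass$ and checking that the factor $\mass\vh_\ell$ remains consistent across the successive applications of $L_{E\Gamma}$, $L_{EE}^{-1}$ and $L_{\Gamma E}$. The only hypotheses that must be flagged are the invertibility of $\mass$ and of $\dstiffr_{EE}+\underline{\lambda}_{\ell}\dmassr_{EE}$, the latter amounting to the requirement that $\underline{\lambda}_{\ell}$ be not a pole of the rational function $\ddtn$.
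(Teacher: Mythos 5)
Your proposal is correct and follows essentially the same route as the paper's proof: both exploit the block structure of the ansatz \eqref{eq:ansatz}, the mixed-product rule $(C\otimes D)(E\otimes F)=(CE)\otimes(DF)$, and the generalized eigenvalue relation $\stiff\vh_\ell=\underline{\lambda}_{\ell}\mass\vh_\ell$ to reduce each block to a scalar (or small-matrix) multiple acting on $\vh_\ell$. The only cosmetic difference is that you solve $L_{EE}\uh_E=L_{E\Gamma}\vh_\ell$ via the explicit ansatz $\uh_E=\wh\otimes\vh_\ell$, which makes the invertibility of $\dstiffr_{EE}+\underline{\lambda}_{\ell}\dmassr_{EE}$ on the relevant subspace a bit more explicit than the paper's direct Kronecker-inverse manipulation.
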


\begin{proof}
For the first term in the definition of $\dDtN$, cf. \eqref{def:discreteDtN}, we have
\begin{align*}
\mass^{-1}L_{\Gamma\Gamma}\vh_{\ell} &= \mass^{-1}(\dstiffr_{\Gamma\Gamma}\otimes\mass +\dmassr_{\Gamma\Gamma}\otimes\stiff) \vh_{\ell}
= M^{-1}(\dstiffr_{\Gamma\Gamma} \mass \vh_{\ell}+\dmassr_{\Gamma\Gamma}\stiff \vh_{\ell})\\
&= \mass^{-1}(\dstiffr_{\Gamma\Gamma} \mass \vh_{\ell}+\dmassr_{\Gamma\Gamma}\lambda_{\ell} \mass \vh_{\ell})
= (\dstiffr_{\Gamma\Gamma}+\lambda_{\ell} \dmassr_{\Gamma\Gamma}) \vh_{\ell}.
\end{align*}
Using a similar computation and the rule $(C\otimes D)(E\otimes F) = (CE\otimes DF)$, we obtain 
\begin{align*}
&M^{-1}L_{\Gamma E}L_{EE}^{-1}L_{E\Gamma}\vh_{\ell}\\
&= M^{-1}[(\dstiffr_{\Gamma E}+\lambda_{\ell}\dmassr_{\Gamma E})\otimes M]
[(\dstiffr_{EE}+\lambda_{\ell}\dmassr_{EE})\otimes M]^{-1}
[(\dstiffr_{E\Gamma}+\lambda\dmassr_{E\Gamma})\otimes M]\vh_{\ell}\\
&= (\dstiffr_{\Gamma E}+\lambda_{\ell}\dmassr_{\Gamma E}) (\dstiffr_{EE}+\lambda_{\ell}\dmassr_{EE})^{-1}
(\dstiffr_{E\Gamma}+\lambda_{\ell}\dmassr_{E\Gamma})(M^{-1}MM^{-1}M)\vh_{\ell}
\end{align*}
for the second term. Putting both together yields the result. 
\end{proof}

If we orthonormalize the eigenvectors $\vh_{\ell}\in \mathbb{R}^{n_\Gamma}$ with respect to the natural inner product 
$\langle \vh,\wh\rangle_M:= \vh^* M\wh$ on $\mathbb{C}^{n_{\Gamma}}$, then applying $\dDtN$ to 
the identity 
$\vh=\sum_{\ell}\vh_{\ell}\, \vh_{\ell}^* M \vh$ yields 
\[
\dDtN = \sum\nolimits_{\ell}\ddtn(\lambda_\ell) \vh_{\ell} \vh_{\ell}^* M =:  \ddtn(-\Delta_{\Gamma,h}).
\]
In other words, $\dDtN$ is a functional calculus at the discrete Laplace-Beltrami operator  $-\Delta_{\Gamma,h}$
in the space $\mathbb{C}^{n_\Gamma}$ equipped with the inner product $\langle\cdot,\cdot\rangle_M$. 
Therefore, we may hope to be able to approximate $\dDtN^{\mathrm{ext}}$ well using our ansatz \eqref{eq:ansatz} 
if it has the same property, i.e.\ if
\begin{align}\label{eq:Dtnex}
\dDtN^{\mathrm{ext}} = \ddtn^{\mathrm{ext}}(-\Delta_{\Gamma,h}) 
\end{align}
for some function $\ddtn^{\mathrm{ext}}$. Note that $\ddtn$ is a rational function 
with numerator of degree $N-1$ and nominator of degree $N$. With such rational function we may hope to interpolate 
$\ddtn^{\mathrm{ext}}$ at at most $O(N)$ points. However, our approach will only be efficient, 
if $N$ can be chosen much smaller than $n_{\Gamma}$. Therefore, in order to at least approximate $\ddtn^{\mathrm{ext}}$ 
accurately on the spectrum of $-\Delta_{\Gamma,h}$ by a rational function of small degree, $\ddtn^{\mathrm{ext}}$ must 
have an extension to a well-behaved smooth function. Of course, every function defined on a finite subset of $\mathbb{R}$ 
can be extended to an analytic function on $\mathbb{R}$ or even $\mathbb{C}$ in many ways, e.g.\ by polynomial interpolation. 
However, fast approximation rates only hold true if derivatives of such an extension satisfy reasonable bounds, and this is what 
we mean by 'well-behaved'. By Cauchy's integral formula, this holds true in particular if $\ddtn^{\mathrm{ext}}$ 
has an analytic extension to a complex neighborhood of the spectrum which does not grow too rapidly with the distance to the spectrum. There is one exception to this smoothness requirement which will turn out relevant: Of course, with our approach we can deal with isolated singularities described by poles of rational functions. 

To summarize our results the ansatz \eqref{eq:ansatz} will be successful if 
\begin{itemize}
\item the matrix $\dDtN^{\mathrm{ext}}$ diagonalizes in the basis of the eigenvectors of the 
discrete Laplace-Beltrami operator $-\Delta_{\Gamma,h}$ such that \eqref{eq:Dtnex} holds and 
\item $\ddtn^{\mathrm{ext}}$, initially defined on the spectrum of $-\Delta_{\Gamma,h}$ can be extended to a 
well-behaved smooth function, possibly with the exception of isolated poles. 
\end{itemize}

\section{DtN maps for separable PDEs and examples}\label{sec:dtnmaps}
In this section we turn to a continuous setting and show that the two criteria at the end of the previous section are typically 
satisfied for PDEs which are separable in the exterior domain.  

\subsection{Framework}\label{ssec:dtnmaps_framework} Let us consider a domain $\Omega\subset \mathbb{R}^d$ consisting of a bounded interior domain $\Omega_{\text{int}}$ 
and an exterior domain  $\Omega_{\text{ext}}$, which is typically unbounded. More precisely, we assume 
that $\overline{\Omega}= \overline{\Omega}_{\text{int}}
\cup \overline{\Omega}_{\text{ext}}$, $\Omega_{\text{int}}\cap \Omega_{\text{ext}} = \emptyset$, and the 
coupling boundary $\Gamma:= \overline{\Omega}_{\text{int}}\cap \overline{\Omega}_{\text{ext}}$ is smooth. 
We consider a linear second-order elliptic PDE (elliptic in the sense that the principal part is elliptic)
\begin{align}\label{eq:separable_bvp}
\calL u = f \qquad \mbox{in }\Omega\qquad + \qquad \text{radiation condition},
\end{align}
such that $\operatorname{supp} f\subset \overline{\Omega}_{\text{int}}$. The radiation conditions at infinity 
will be specified in the examples. 
If the boundary of $\Omega$ is not empty ($\partial \Omega\neq \emptyset$), then a boundary condition on $\partial \Omega$ is required in addition.  
Our main focus will be on the exterior Dirichlet problem 
\begin{align}\label{eq:bvp_ext}
\begin{aligned}
&\calL u = 0 && \mbox{in }\Omega_{\text{ext}}\qquad + \qquad \text{radiation condition},\\
& u=u_0 && \mbox{on }\Gamma,
\end{aligned}
\end{align}
for some Dirichlet data $u_0$. The Dirichlet-to-Neumann map is defined by 
$\DtN u_0:= \frac{\partial u}{\partial \nu}|_\Gamma$ where $\nu$ denotes the unit normal vector pointing into the exterior 
of $\Omega_{\text{ext}}$. 

We assume that there exists a diffeomorphism
\begin{subequations}
\begin{align}
\Psi: &[a,\infty)\times \Gamma \to \overline{\Omega}_{\text{ext}} \text{ such that} \nonumber \\
&\Psi(\{a\}\times\Gamma) = \Gamma,\\
\label{eq:Psi_normalization}
&|\partial_r\Psi(a,\widehat{x})|_2 =1,& \widehat{x}\in \Gamma,
\end{align}
\end{subequations}
with the derivative $\partial_r$ with respect to the first variable. Further, we assume that $\calL$ separates in the coordinates given by $\Psi$ as follows:
\begin{align}\label{eq:separable_bvp2}
(\calL u) \circ \Psi = \left[\stiffr\otimes \Id_{\Gamma}+  \massr\otimes (-\Delta_\Gamma)\right] (u\circ \Psi)
\end{align}
Here $\stiffr$ is a second-order differential operator in $r$, 
$\massr$ is a multiplication operator on $L^2([a,\infty))$, $\Delta_\Gamma$ is the Laplace-Beltrami 
operator on $\Gamma$,  and $\Id_{\Gamma}$ the identity operator 
on $L^2(\Gamma)$. Further recall that tensor products of linear operators are uniquely defined by $(A\otimes B)(v\otimes w) = Av\otimes Bw$, 
and for $L^2$-functions $v$ and $w$ one has $(v\otimes w)(x,y)= v(x)w(y)$. (In other words, $A$ acts on the first variable and 
$B$ on the second variable.) 

As before, let $\{\vh_{\ell}:\ell\in \mathbb{N}\}$ be an orthonormal basis of $L^2(\Gamma)$ consisting of eigenvectors of 
$-\Delta_{\Gamma}$ with corresponding eigenvalues $\lambda_{\ell}$. Such an orthonormal basis exists since the 
Laplace-Beltrami operator on a smooth compact manifold is self-adjoint and has a compact resolvent. 
To solve \eqref{eq:separable_bvp} in $\Omega_{\textrm{ext}}$ in the coordinates $(r,\widehat{x})$ 
by separation of variables, we have to study the separated equations 
\begin{subequations}\label{eqs:bvp_dtn_numbers}
\begin{align}\label{eq:ODE_dtn_numbers}
&\left[ \stiffr  + \lambda_{\ell}\massr  \right] \Lambda_{r}( \lambda_{\ell}) = 0\qquad\mbox{for } r \in [a,\infty),\\
\label{eq:bc_dtn_numbers}
&\Lambda_a(\lambda_{\ell}) =1, \qquad \Lambda_r \mbox{ satisfies the radiation condition.} 
\end{align}
\end{subequations}
Then the radiating solution to $\calL u=0$ in $\Omega_{\text{ext}}$ with Dirichlet data $u|_{\Gamma}=u_0$ can be represented as 
\begin{equation}\label{eq:abstract_separation}
(u\circ\Psi)(r,\cdot) = \Lambda_{r}( -\Delta_{\Gamma}) u_{0} = \sum\nolimits_{\ell=1}^\infty \Lambda_{r}( \lambda_{\ell}) \vh_{\ell} (u_{0},\vh_{\ell} )_{\Gamma}.
\end{equation}
Since Neumann data (with the interior normal vector on $\Omega_{\text{int}}$) are given by $-\partial_r(u\circ\Psi)|_{r=a}$, the Dirichlet-to-Neumann map 
$\DtN:u_0\to -\partial_r(u\circ\Psi)|_{r=a}$ is given by 
\begin{align}\label{eq:DtN}
\DtN = \dtn(-\Delta_{\Gamma}) \quad \mbox{with}\quad 
\dtn(\lambda) := - \partial_{r} \Lambda_{r}( \lambda) \vert_{r = a}. 
\end{align}

Often, analytical expressions for the solutions of \cref{eq:ODE_dtn_numbers}-\cref{eq:bc_dtn_numbers} and consequently for $\dtn$ are known as the following examples show. 
Otherwise, these solutions can be obtained numerically at negligible costs by solving ordinary differential equations. 
This is how we proceed for the case of the solar atmosphere as considered in \cref{ex:helio}.

\subsection{Examples of (piecewise) homogeneous exterior domains} 
We first discuss two examples of homogeneous exterior domains. 
We will show in particular that the second criterion at the end of \cref{sec:derivation} 
is satisfied in the continuous setting since $\dtn$-functions have natural analytic extensions to complex neighborhoods of the spectrum.

\begin{example}[homogeneous Helmholtz equation in the exterior of a ball]\label{ex:Helmholtz_ball}
We consider the exterior Dirichlet problem 
\begin{subequations}\label{eqs:scat}
\begin{align}
&-\Delta u -\wavenr^2 u= 0&& \text{in }\Omega_{\text{ext}}:=\{x\in\mathbb{R}^d:|x|_2>a\}, \\
&u=u_0&&\mbox{on }\Gamma = a \mathbb{S}^{d-1},\\
&\lim_{r\to\infty}r^{(d-1)/2}\left(\frac{\partial u}{\partial r}-i\wavenr u\right) =0&& r=|x|_2
\end{align}
\end{subequations}
which occurs in the modelling of waves scattered by bounded obstacles, either penetrable or impenentrable 
(see, e.g., \cite{CK:12} for more information). Setting
\begin{align}\label{eq:spherical_coo}
\Psi(r,\widehat{x}) := \frac{r}{a}\widehat{x} \qquad \text{for }r\in [a,\infty),\,\widehat{x}\in a\mathbb{S}^{d-1},
\end{align}
we obtain 
\begin{equation} \label{eq:helmholtz sep}
(-\Delta u -\wavenr^2 u)\left(\Psi(r,\widehat{x})\right) 
= \left(-r^{1-d}\partial_r(r^{d-1}\partial_r)-\frac{a^2}{r^2}\Delta_{\Gamma}-\wavenr^2\Id\right)(u\circ \Psi)(r,\widehat{x}),
\end{equation}
i.e.\ $\stiffr = -r^{1-d}\partial_r(r^{d-1}\partial_r)-\wavenr^2\Id$ and $(\massr v)(r) = \frac{a^2}{r^2}v(r)$. 
Therefore, \eqref{eq:ODE_dtn_numbers} is the Bessel differential equation. 
For $d=2$ the solution satisfying \eqref{eq:bc_dtn_numbers} is given by 
$\Lambda_r(\lambda) = H^{(1)}_{ a \sqrt{\lambda   }}(\wavenr r)/ H^{(1)}_{a \sqrt{\lambda   }}(\wavenr a) $ 
in terms of the Hankel function $H^{(1)}_{a \sqrt{ \lambda   }}$ of the first kind of order $a \sqrt{\lambda}$. 
This leads to the $\dtn$-function 
\begin{equation}\label{eq:dtn_fct_helmholtz2D}
\dtn^{\mathrm{hom}}(\lambda) = \frac{-  \wavenr }{ H^{(1)}_{a \sqrt{\lambda   }}(\wavenr a) }
(H^{(1)}_{ a \sqrt{\lambda   }})^{\prime}(\wavenr a).
\end{equation} 
This function is initially defined on the spectrum $\sigma(-\Delta_\Gamma) = \{(\ell/a)^2:\ell\in \mathbb{N}\cup\{0\}\}$ where the order 
of the Hankel functions is integer-valued. However, Hankel functions can be defined 
also for complex-valued orders $\nu$, and $H^{(1)}_{\nu}(x)$ is an entire function of $\nu$  
(see \cite{MK60}). Due to the recurrence relation 
$2(H^{(1)}_{\nu})^{\prime}(x)= H^{(1)}_{\nu-1}(x) -H^{(1)}_{\nu+1}(x)$, the derivative
$(H^{(1)}_{\nu})^{\prime}(x)$ with respect to $x$ is an entire function of $\nu$ as well. 
Therefore, choosing the negative real axis $\mathbb{R}_-:=\{-x:x\geq 0\}$ 
as branch cut of the square root function, 
$\dtn^{\mathrm{hom}}$ is a meromorphic function on $\mathbb{C}\setminus\mathbb{R}_-$. 
\cite{MK60} also contains analytical results, e.g.\ on the asymptotic behavior of the zeros of $H^{(1)}_{\bullet}(x)$ and hence of the poles of $\dtn^{\mathrm{hom}}$. 
These poles are plotted in \cref{fig:dtn-scat} using numerical computations. 
\end{example}

For computing roots and poles of meromorphic functions we utilize a mesh-based technique presented in \cite{PK15,PK18}. 
 First, roots and poles are isolated using adaptive mesh refinement driven by an analysis of the function's phase and then verified by applying a discretized version of 
 the argument principle. The special functions for evaluating $\dtn$ at complex numbers are taken from the library  \texttt{mpmath} \cite{FJ13}.

Our next example concerns an inhomogeneous exterior domain involving a jump: 
\begin{example}[Exterior domain with a jump]\label{ex:jump}
We again consider the problem \eqref{eqs:scat} in dimension $d=2$, but now assume that $\wavenr$ depends in a piecewise constant manner on $r$. More precisely, we assume that for some $a \leq \RJump < \infty$ and 
$\wavenr_{I}, \wavenr_{\infty} > 0$ we have 
\begin{equation}\label{eq:disc_wavespeed}
\wavenr(r) = \begin{cases}
\wavenr_{I} & r < \RJump, \\
\wavenr_{\infty} & r > \RJump.
\end{cases}
\end{equation}
From the boundary condition at $r = a$ and matching conditions at $r=\RJump$ it follows
that the $\dtn$ function is given by 
\begin{equation*}
\dtn^{\mathrm{jump}}(\lambda) = \zeta(a \sqrt{\lambda}), \quad   \zeta(\nu) =  - \wavenr_I \left[ A_{\nu} J_{\nu}^{\prime}(\wavenr_I a )  + B_{\nu} Y_{\nu}^{\prime}(\wavenr_I a )  \right].
\end{equation*}
Here $Y_{\nu}$ denotes the Bessel function of the second kind of order $\nu$.
The constants $A_{\nu}$ and $B_{\nu}$, which are derived in the appendix, depend on the problem parameters $a,\RJump,\wavenr_{I}$ and $\wavenr_{\infty}$.

The function $\dtn^{\mathrm{jump}}(\lambda )$ for  $a = 1$, $\RJump=2$ and $\wavenr_{I} = 16$, 
and $\wavenr_{\infty}=8$ is shown in \cref{fig:dtn-jump}.
Additionally, the poles of its meromorphic extension are displayed which are all simple and located extremely close to the positive real axis.
As a result, the behavior $\dtn^{\mathrm{jump}}(\lambda )$ on the domain of interest is essentially dominated by these poles. 
It is then natural to expect and will later be confirmed in numerical experiments that an excellent fit of $\dtn^{\mathrm{jump}}$ may 
be obtained by means of a rational approximation with a few simple poles. 
\end{example}

\begin{example}[Helmholtz equation in a wave-guide]\label{ex:Helmholtz_waveguide} 
Let $\tilde{\Gamma}\subset \mathbb{R}^{d-1}$ be a smooth, bounded domain and define 
$\Gamma:=\{a\}\times \tilde{\Gamma}$ for some $a\in \mathbb{R}$ and 
\[
\Psi(r,\widehat{x}):= \begin{pmatrix}r-a\\ 0 \end{pmatrix}+ \tilde{x}\qquad \mbox{for }r\in[a,\infty), \;\widehat{x}\in \Gamma. 
\]
We consider the Helmholtz equation 
\begin{align}\label{eq:Helmholz_waveguide}
\begin{aligned}
&(-\Delta-\wavenr^2)u =0&& \mbox{in }\Omega_{\text{ext}}:=(a,\infty)\times\tilde{\Gamma}\\
&u=0 &&\text{on }(a,\infty)\times \partial \tilde{\Gamma}
\end{aligned}
\end{align}
together with a radiation condition that will be discussed below. 
Let $\Delta_\Gamma$ denote the Laplace-Beltrami operator on $\Gamma$ with Dirichlet boundary conditions. 
Then obviously 
\[
(-\Delta u -\wavenr^2 u)\left( \Psi(r,\widehat{x})\right) 
= \left(-\partial_r^2 - \Delta_{\Gamma} -\wavenr^2\right) (u \circ\Psi)(r,\widehat{x}),
\]
i.e.\ $\stiffr = -\partial_r^2-\wavenr^2\Id$ and $\massr = \Id$. 
Therefore, \eqref{eq:ODE_dtn_numbers} has the two independent solutions $\exp(\pm i \sqrt{\wavenr^2-\lambda_{\ell}}(\cdot-a))$. 
Here we choose the negative imaginary axis as branch cut of the square root function. Moreover, we assume that 
\begin{align}\label{eq:waveguide_assump}
\wavenr^2\notin \sigma(-\Delta_\Gamma).
\end{align}
Then we say that a solution to \eqref{eq:Helmholz_waveguide} satisfies the radiation condition if it has an expansion 
\eqref{eq:abstract_separation} in terms of the functions $\Lambda_r(\lambda_{\ell}):= \exp(i \sqrt{\wavenr^2-\lambda_{\ell}}(r-a))$.

This leads to the $\dtn$ function
\[
\dtn^{\mathrm{guide}}(\lambda) = -i \sqrt{\wavenr^2-\lambda}.
\]
Under the assumption \eqref{eq:waveguide_assump} it has a holomorphic extension to a neighborhood of $\sigma(-\Delta_\Gamma)$ (see \cref{fig:dtn-guide}). 
However, due to the branch cut singularity it does not have a natural meromorphic extension to the entire complex plane. 
\end{example}

\begin{figure}[htbp]
   \centering
\subfloat[ Homogen.: $\dtn^{\mathrm{hom}}$ ]%
{\label{fig:dtn-scat}
\includegraphics[scale=0.38]{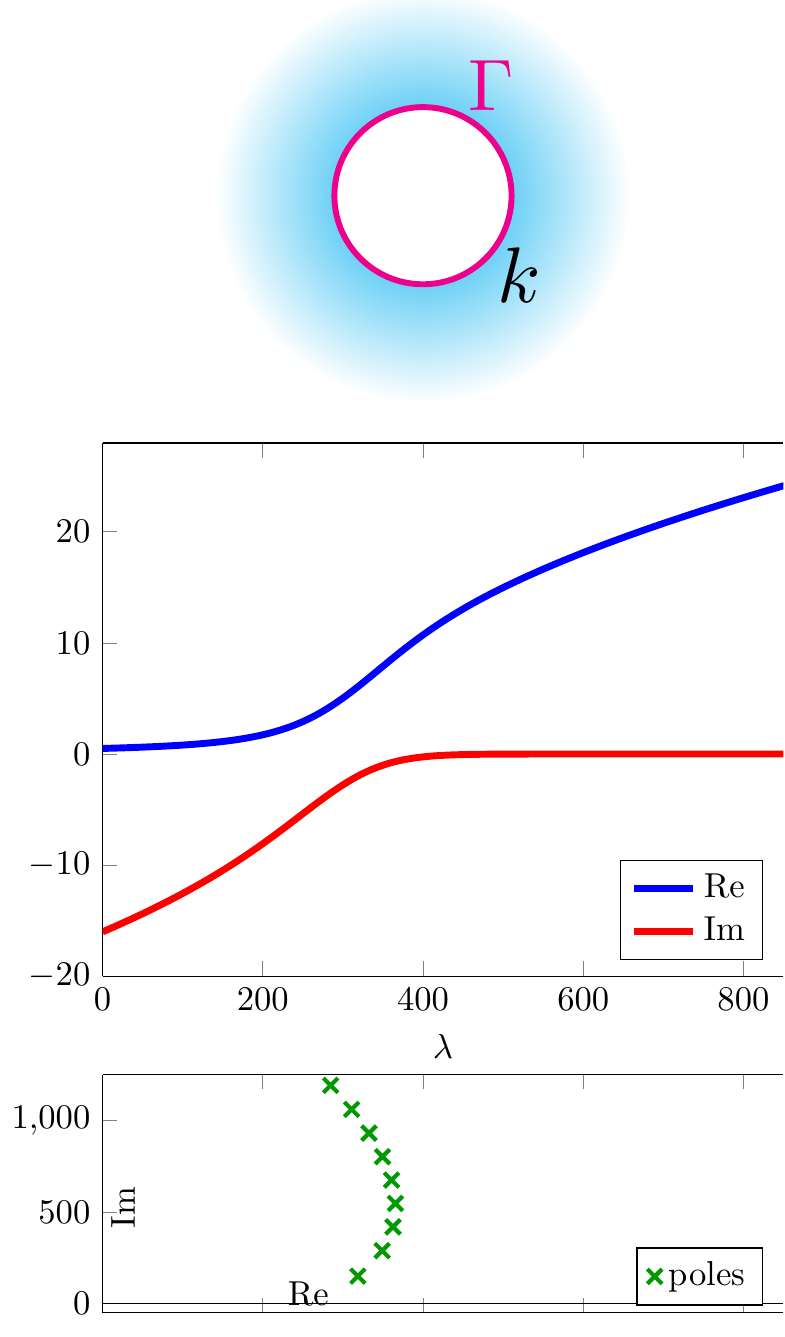}}
\centering
\subfloat[ Jump: $\dtn^{\mathrm{jump}}$]
{ \label{fig:dtn-jump}%
\includegraphics[scale=0.38]{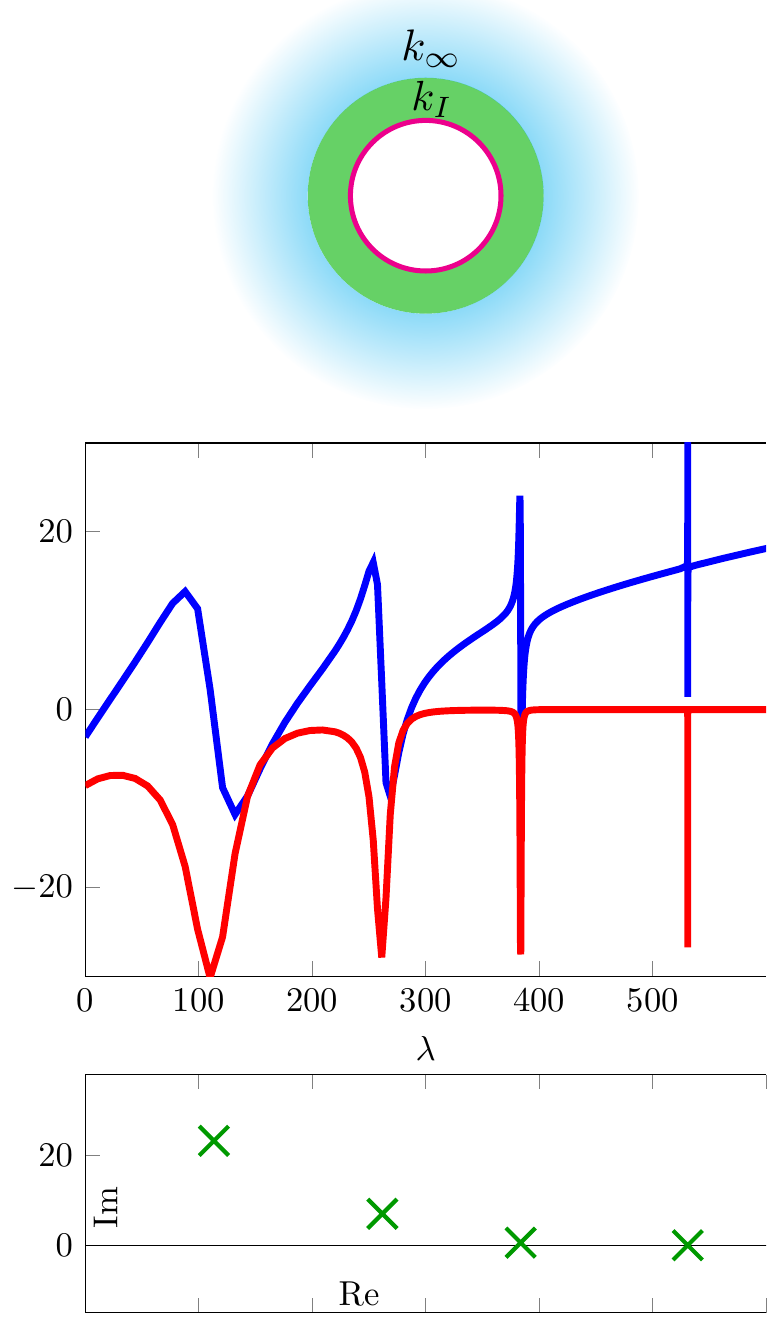}}
\subfloat[ Waveguide:$\,\dtn^{\mathrm{guide}}$]
{ \label{fig:dtn-guide}%
\includegraphics[scale=0.38]{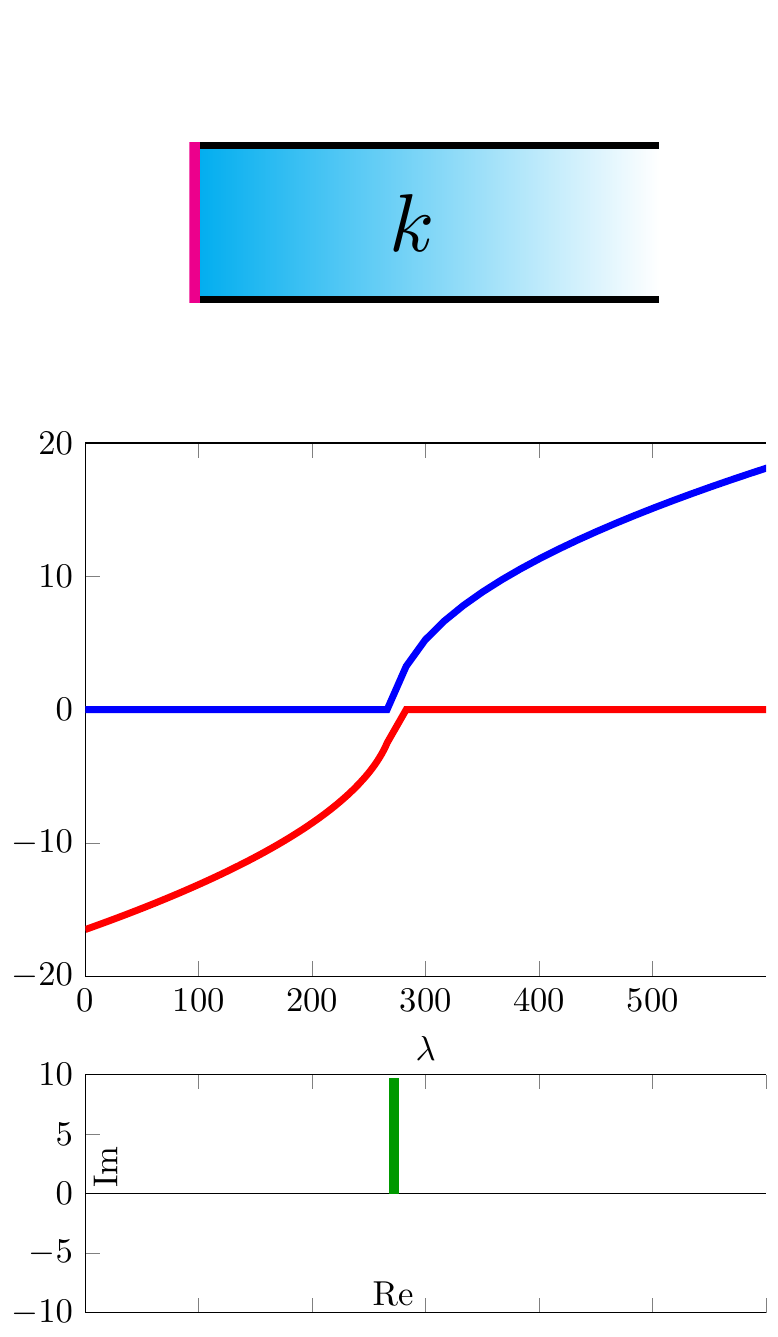}}
\subfloat[ Sun: $\dtn^{\mathrm{VAL-C}}$]
{ \label{fig:dtn-VALC}%
\includegraphics[scale=0.38]{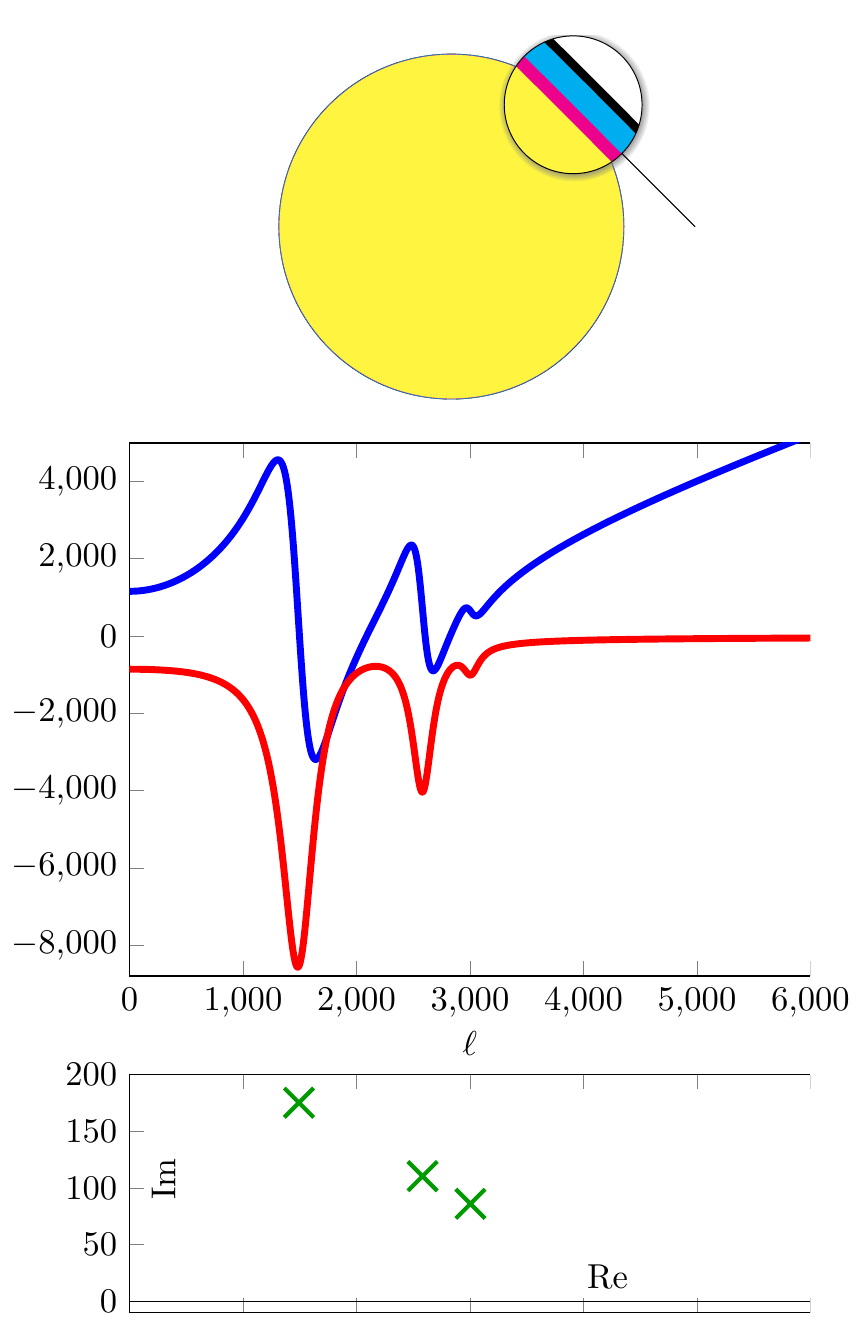}}
\caption{ Comparison of the $\dtn$ functions considered in this paper: 
(a):  homogeneous Helmholtz equation in the exterior of a ball (\cref{ex:Helmholtz_ball}), 
(b) same as (a) with jumping coefficient (\cref{ex:jump}), 
(c) homogeneous wave-guide with Dirichlet boundary conditions (\cref{ex:Helmholtz_waveguide}), 
(d) VAL-C model of the solar atmosphere with Neumann condition at the corona (\cref{ex:helio}). 
First row: geometries, second row: real and imaginary part of the $\dtn$ functions, third row: 
structure of the singularities of analytic extensions of the $\dtn$ functions, which are poles 
in (a), (b) and (d), and a branch cut in (c).}
\label{fig:dtn-comparison}
\end{figure}

\subsection{A structural argument for the analyticity of $\dtn$}\label{sec:structarg}
In the examples of the previous subsection, analytic extensions could be derived from the explicit form of the $\dtn$-function. 
In the following, we wish to provide a more structural argument for the existence of such natural extensions which is also applicable if $\dtn$ cannot be computed explicitly: 

A main difficulty in the analysis of the boundary value problems 
\eqref{eqs:bvp_dtn_numbers} defining $\dtn$ is the treatment of the radiation condition, 
which is a condition on the asymptotic behavior at infinity. 
As we are only interested in the $\dtn$-function, we may use a transformation of the differential equation, which allows to 
incorporate the radiation condition in the function space and leads to the same $\dtn$ function. The most prominent methods are 
Perfectly Matched Layers (PMLs) which formulate a differential equation for the analytic extensions $\tilde{\Lambda}_r(\lambda)$ of 
$r\mapsto \Lambda_r(\lambda)$ 
to a certain path in the complex plane (see \cref{sec:TP_PML}) for more details). This is done such that the radiation condition is satisfied if and only if the 
analytic extension is bounded. Therefore, $\tilde{\Lambda}_r(\lambda)$ is characterized by a boundary value problem 
$[\tilde{\stiffr}+\lambda\tilde{\massr}]\tilde{\Lambda}_r(\lambda)=0$, $\tilde{\Lambda}_a(\lambda)=1$, which is 
of the same form as \eqref{eqs:bvp_dtn_numbers}, but without the radiation condition. 
If $\tilde{\stiffr}+\lambda\tilde{\massr}$ is a Fredholm operator for all $\lambda$ in some domain $D\subset\mathbb{C}$ and 
$\tilde{\stiffr}+\lambda\tilde{\massr}$ is invertible for at least one $\lambda\in D$ (e.g.\ $\lambda\in \sigma(-\Delta_\Gamma)$), 
then by analytic Fredholm theory $\lambda\mapsto \tilde{\Lambda}_r(\lambda)$ is a function-valued meromorphic mapping on $D$, 
and hence $\dtn$ is a meromorphic function on $D$. 

This situation occurs in Examples \ref{ex:Helmholtz_ball} and \ref{ex:jump}.
In \cref{ex:Helmholtz_waveguide} the branch cut occurs since the PML formulation breaks down for $\lambda=\wavenr^2$.

\subsection{Models of the solar atmosphere}\label{sec:helio_models} 
We now discuss in more detail PDE models in helioseismology as mentioned already in the introduction. 
Although more complex systems of differential equations have been derived 
to describe solar (and more generally stellar) oscillations, 
it has been shown in \cite{GB17}  that the scalar time-harmonic wave equation 
\begin{equation}\label{eq:helio_original}
-\Div \left( \frac{1}{\rho} \nabla{u} \right)  - \frac{\sigma^2}{\rho c^2} u = 0
\end{equation}
captures the physics of oscillating pressure modes in the Sun to a reasonable extent. 
$\sigma = \omega+i\gamma$ contains the frequency $\omega>0$ and a positive absorption coefficient 
$\gamma(r)>0$.
In the solar atmosphere, sound speed $c$ and density $\rho$ essentially depend only on the radial variable 
$r$, 
but they are usually nonanalytic functions derived from tabulated values that vary 
over several orders of magnitude (see \cref{fig:c_rho_VALC}). We use the VAL-C model 
proposed in \cite{VAL81} here combined with  Models S \cite{CD96} for the solar interior.  
Whereas pressure decreases sharply and monotonically, sound speed first descreases in the solar atmosphere and then increases again towards 
the corona, which is related to an increase of temperature from about $5000^{\circ}\mathrm{C}$ at the photosphere 
to several million $^{\circ}\mathrm{C}$ in the corona. In the so-called convection zone,  
which comprises that upper 28\% of the solar interior, first order terms in \eqref{eq:helio_original}
play a dominant role. Since our focus is on the solar atmosphere, we have omitted this term in 
\eqref{eq:helio_original}. Note that in this form, \eqref{eq:helio_original} satisfies 
the separation condition \eqref{eq:separable_bvp2} with $\Psi$ given by \eqref{eq:spherical_coo}, 
$\stiffr = -r^{1-d}\partial_r(r^{d-1} \frac{1}{\rho} \partial_r)- \frac{\sigma^2}{\rho c^2} \Id$ 
and $(\massr v)(r) = \frac{a^2}{r^2}v(r)$. \par 

 \begin{figure}[htbp]
   \centering
\subfloat[ Sound speed and density. ]%
{\label{fig:c_rho_VALC}\includegraphics[scale=1.0]{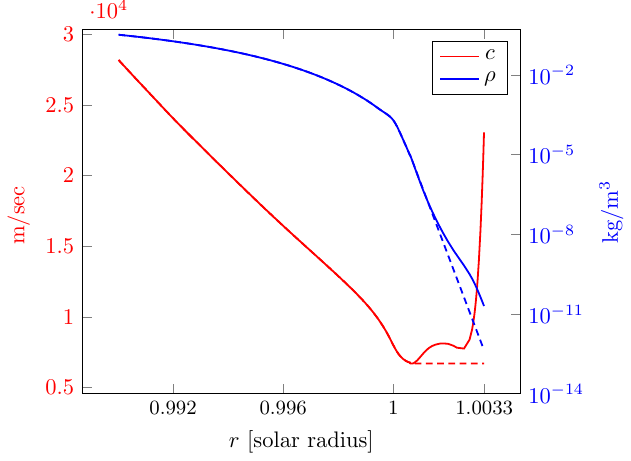}}
\centering
\subfloat[ Real part of the effective potential $v$.]{ \label{fig:pot_VALC}
\includegraphics[scale=1.0]{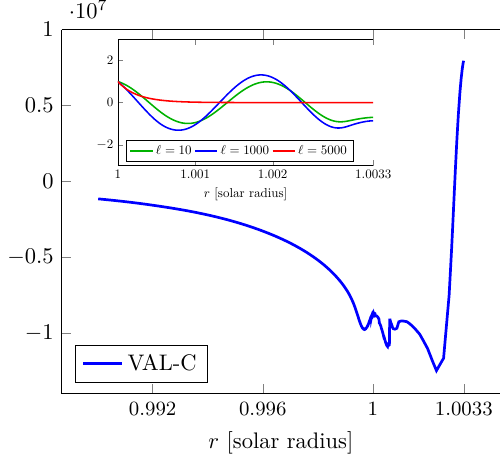}
 }
\caption{ Background coefficients for the Sun. 
Dashed lines in the left plot indicate a simplified model used in \cite{BC18,FL17}. 
The right plot shows the corresponding effective potential after transformation to a Schr\"odinger-type equation at $7.0$ mHz. 
	 Additionally, some of the functions $\Re{(\Lambda_r(\lambda_{\ell}))}$ appearing in the computation of  $\dtn^{\mathrm{VAL-C}}$ are displayed in the small inset. }
\label{fig:helio_coeff}
\end{figure}

\begin{example}[transformed equation for solar sound waves]\label{ex:helio}
Instead of directly working with equation  \cref{eq:helio_original} we follow the literature, see e.g.\  \cite{AHN18,BFP19} and perform a transformation to a Schr\"odinger-type equation with an 
effective potential $v$ by means of the substitution $u = \sqrt{\rho} \tilde{u}$. This yields the equation
\begin{subequations}\label{eqs:schroedinger}
\begin{align}\label{eq:schroedinger}
(-\Delta + v) \tilde{u} = 0 \qquad \mbox{with} \qquad 
v = \rho^{1/2} \Delta ( \rho^{-1/2}) - \sigma^2/c^2.
\end{align} 
Of course, \eqref{eq:schroedinger} also satisfies the separation condition \eqref{eq:separable_bvp2} with 
$\Psi$ given by \eqref{eq:spherical_coo}, 
$\stiffr = -r^{1-d}\partial_r(r^{d-1}  \partial_r)+ v$ 
and $(\massr w)(r) = \frac{a^2}{r^2}w(r)$. Since the absorption coefficient $\gamma = \Im \sigma$ is 
positive, it suffices to impose boundedness as radiation condition. Alternatively, we may just impose 
a homogeneous Neumann condition close to the corona at a distance $\RVALC$ from the center where the 
influence of pressure on the movement of matter becomes negligible and the PDE models \eqref{eq:helio_original} 
and \eqref{eq:schroedinger} break down: 
\begin{align}
&\tilde{u} = u_0 &&\mbox{on }a\mathbb{S}^2\\
&\partial_r\tilde{u} = 0 &&\mbox{on }\RVALC\mathbb{S}^2.
\end{align}
\end{subequations}
A small frequency dependent damping  is added as described in Section 7.3 of \cite{GB17}. 
The coupling boundary $\Gamma$ is positioned directly at the solar surface, i.e.\ $a=1.0$ (in terms of the solar radius).

For realistic sound speed and density as given by the VAL-C model, 
the $\dtn$ function admits no analytical expression. Hence, the 
values $\dtn^{\text{VAL-C}}(\lambda)$ have to be computed 
by solving the ODE \cref{eq:ODE_dtn_numbers} with Dirichlet condition at $r=a$ and Neumann condition 
at $r=\RVALC$. However, one can apply the argument of 
\cref{sec:structarg} to show the existence of a natural meromorphic extension of $\dtn^{\text{VAL-C}}$. 
In fact, by standard arguments using compactness of embeddings one can see that the weak formulation of 
\eqref{eqs:schroedinger} leads to a Fredholm operator of index $0$. Moreover, since $\gamma>0$ it can be shown  
to be injective by taking the imaginary part of the variational formulation. 
Therefore, the operator is also surjective, and \eqref{eqs:schroedinger} is well-posed. 

The values of $\dtn^{\text{VAL-C}}$ at the eigenvalues $\lambda_{\ell}$ 
for a frequency of $7.0$ mHz are shown  in \cref{fig:helio_coeff}.
These values were obtained by solving an ODE for which 
we discretized the interval $[a,\RVALC]$ with one hundred equidistant finite elements of order eight.
The potential well produced by the increase in sound speed apparently leads to a similar behavior as the discontinuous wavespeed considered in \cref{ssection:jump_numexp}. However, due to the huge magnitude of the 
potential extremely large modes (note the scaling of the abscissa) are required to capture the behavior of 
$\dtn^{\mathrm{VAL-C}}$.  
\end{example}

As current state-of-the-art in computational local helioseismology, 
local non-reflecting boundary conditions for a simplified model of the solar atmosphere 
(so-called \emph{atmospheric radiation conditions}) have been proposed and validated in \cite{BC18,FL17}.  
In this simplified model it is assumed that density decays exponentially and the sound speed is constant 
(see \cref{fig:c_rho_VALC}). This simplification (even without further approximations) only 
yields reasonable results if 
the coupling boundary is placed a few hundred kilometers above the photosphere whereas our approach 
offers the possibility to place the 
the coupling boundary on the photosphere. This allows for a significant reduction of the number of {\dof}s. 
Moreover, the spikes in $\dtn^{\mathrm{VAL-C}}$ displayed in \cref{fig:dtn-VALC} are not captured in this simplified model. They seem to be associated with reflections at the solar corona, which are discussed in 
\cite{FL17} as potential reason for certain discrepancies between their simulation results and helioseismic 
observations.

\section{Approximation of $\dtn$ functions}
\label{sec:approx_DtN}

The idea of learned infinite elements is to optimize the parameters $\dstiffr$ and $\dmassr$ in the discrete DtN function $ \ddtn$, cf. \cref{sec:derivation},
representing the approximate 
transparent boundary conditions to achieve an optimal fit of the true DtN function $ \dtn$. A corresponding minimization problem is formulated in \cref{ssection:minimization_problem}. A more efficient reduced ansatz for the matrices  $\dstiffr$ and $\dmassr$ is introduced in \cref{ssection:redans} and its use is demonstrated  in \cref{ssection:diag_and_poles}.
This reduction exposes $ \ddtn$ as a rational approximation of $\dtn$ with simple poles. Convergence of this approximation is discussed in \cref{ssection:convergence}.

\subsection{The minimization problem}\label{ssection:minimization_problem}

We are now in a position to define misfit function and minimization problem for the task of finding optimal $\dstiffr, \dmassr$.
\begin{definition}
Let $\lambda_{\ell}$ be the continuous eigenvalues of the Laplace-Beltrami operator on $\Gamma$.
Denote by  $ \dtn(\lambda_{\ell})$  the DtN numbers obtained from solving the ordinary differential equations \cref{eq:ODE_dtn_numbers}. 
Further, let $ \ddtn( \lambda_{\ell})$ be the approximate DtN numbers of the learned infinite elements 
as in equation \cref{eq:dtn_fct_discrete}. For positive weights $w_{\ell}$ we define the misfit function
\begin{equation}\label{eq:misfit}
J(\dstiffr,\dmassr) = \frac{1}{2} \sum\nolimits_{\ell}{   \vert w_{\ell}(  \dtn(\lambda_{\ell}) -   \ddtn( \lambda_{\ell}   ) )   \vert^2}.
\end{equation}
The minimization problem is to find $\dstiffr,\dmassr \in \mathbb{C}^{(N+1)\times (N+1)}$ so that
\begin{equation}\label{eq:minimization problem}
\dstiffr,\dmassr \in \argmin_{\dstiffr,\dmassr \in \mathbb{C}^{(N+1)\times (N+1)}} J \left( \dstiffr,\dmassr \right). 
\end{equation}
\end{definition}

Some remarks about this minimization problem are given below.
\begin{itemize}
\item In applications the transparent boundary condition is  usually combined with a finite element discretization of an interior problem. 
 The weights $w_{\ell}$ in the misfit function \cref{eq:misfit} determine how accurate the corresponding 
 $\dtn$-function will be fitted for which modes. As solutions to PDEs with constant or analytic coefficients 
 are analytic, the coefficients of these modes usually decay exponentially. Therefore, also the weights 
 should asymptotically decay exponentially. 
 However, to achieve optimal results the profile pertaining the propagating modes should be 
 chosen with some care.
 Appropriate choices are discussed when considering example problems in  \cref{sec:numexp}.
\item The choice of the objective function \cref{eq:misfit} is motivated by our demand to obtain an optimal approximation of the DtN map in a reasonable sense. 
Approaches which are to some extent related have been pursued before. 
For example, in \cite{G98} high-order \emph{local} non-reflecting boundary conditions are constructed based on the requirement to provide $L^2$ best approximations of the DtN map for smooth functions which can be realized via auxiliary variables \cite{G02}.
Here we do not confine ourselves to local approximations, and 
instead of using the $L^2$ norm, the approach considered here is based on the natural norm of $\DtN$ as a continuous linear operator from $H^s(\Gamma), s \geq 1/2$ to $H^{-1/2}(\Gamma)$. 
The attentive reader may have noticed that this is not completely true since the operator norm would be bounded by $\sup_{\ell} \vert  \dtn(\lambda_{\ell}) -   \ddtn( \lambda_{\ell}   ) \vert $  while a weighted $\ell^2$ norm is considered in the objective function. 
This choice is made for ease of implementation and justified for smooth solutions for which it suffices to control a finite number of modes so that equivalence of norms holds true.
\item Note that the misfit function is formulated in terms of the continuous eigenvalues $\lambda_{\ell}$. The discrete eigenvalues and eigenvectors are not needed. 
\item We had good success with using trust region methods for solving the nonlinear least squares problem \cref{eq:minimization problem}.
Specifically, we utilize the Levenberg-Marquardt algorithm as implemented in the \texttt{Ceres-solver} \cite{ceres-solver}.
Implementational details are described in \cref{section:impl_details}. 
\end{itemize}


\subsection{Reduced ansatz}\label{ssection:redans}
For dense matrices $\dstiffr$ and $\dmassr$  the number of nonzero entries of the tensor product system $\dstiffr \otimes \massFEM  + \dmassr \otimes \stiffFEM$ grows quadratically with $N$. 
To improve efficiency and to reduce the number of free parameters it is therefore desirable to sparsify $\dstiffr$ and $\dmassr$. 
The next step is to identify which entries are dispensable.
To this end, let us assume that
\begin{equation}\label{eq:diagonalization_assumption}
	(\dmassr_{E E})^{-1}  \dstiffr_{E E} \; \text{is diagonalizable,} 
\end{equation}
that is, there exists a diagonal matrix $D$ and an invertible matrix $P$ such that   $(\dmassr_{E E})^{-1}  \dstiffr_{E E} = P D P^{-1}$ 
(see also \cref{rem:diagonalizable}). Then 
\begin{equation*}
(  \dstiffr_{E E} + \lambda \dmassr_{E E}  )^{-1}  =  P ( D + \lambda I )^{-1} (\dmassr_{E E}  P)^{-1}
\end{equation*}
holds.
Inserting this identity into \cref{eq:dtn_fct_discrete} yields that the reduced ansatz
\begin{equation*}
\dstiffr = 
 \begin{bmatrix}
	 \dstiffr_{00}  & \dstiffr_{01}  &   \cdots  &  \cdots &  \dstiffr_{0  N }  \\
    \dstiffr_{10}     &   \ddots       &   &  \mathbf{0} &  \\
    \vdots &  & \ddots   &  &  \\
        \vdots & \mathbf{0}  &   & \ddots  &  \\
    \dstiffr_{ N 0}  &  &     &  & \dstiffr_{ N N}    \\
    \end{bmatrix} , 
    \quad 
   \dmassr = 
 \begin{bmatrix}
   \dmassr_{00}  & \dmassr_{01} &   \cdots  &  \cdots &  \dmassr_{0 N} \\
    1      &   1     &   &  \mathbf{0} &  \\
    \vdots &  & \ddots   &  &  \\
        \vdots & \mathbf{0}  &   & \ddots  &  \\
   1  &  &     &  & 1   \\
    \end{bmatrix}
     \end{equation*}
 leads to the same $\ddtn$ function as the approach with dense matrices $\dstiffr$ and $\dmassr$.
 The resulting system $\dstiffr \otimes \massFEM  + \dmassr \otimes \stiffFEM$ will be much sparser though, 
 and the number of free parameters grows only linearly with $N$.  \par 

In the reduced ansatz $\ddtn$ is a rational function which is asymptotically linear and has a finite number of simple poles. 
This can be seen from the formula
 \begin{equation}\label{eq:zeta_learned_diag}
 \ddtn(\lambda)  = \dstiffr_{00} + \lambda \dmassr_{00} - \sum\limits_{j=1}^{N}{ \frac{(\dstiffr_{0j} + \lambda \dmassr_{0j}) ( \dstiffr_{j0} + \lambda  )  }{  \dstiffr_{jj} + \lambda  }     }. 
 \end{equation}
 The second term is a rational function with simple poles at  
 \begin{equation}\label{eq:learned_poles}
 \lambda_{j}^{*}(N) = -\dstiffr_{j,j}, \; j= 1, \ldots,N. 
 \end{equation}
This means that the diagonal entries of $-\dstiffr$ correspond to simple poles of the rational approximation. 
Thus, by optimizing the matrices we optimize the poles at the same time. 

\begin{remark}[Assumption \eqref{eq:diagonalization_assumption}]\label{rem:diagonalizable}
Poles with higher multiplicities are excluded by assumption \cref{eq:diagonalization_assumption} and are consequently not covered by the reduced ansatz.
If this assumption breaks down, then the ansatz using dense matrices is more general as it would allow for higher multiplicites and may lead to a better approximation of $\dtn$ by $\ddtn$. However, below, in \cref{ssection:convergence} we show that the reduced ansatz is still sufficient to guarantee exponential convergence rates on finite intervals. 
Additionally, all $\dtn$ functions which we investigated so far had only simple poles which renders an ansatz for $\ddtn$ of the form \cref{eq:zeta_learned_diag} very natural.
\end{remark}

\subsection{Pole structure}\label{ssection:diag_and_poles}
To justify the previously introduced reduction the following numerical experiment is considered. 
The analytic DtN numbers for the Helmholtz equation in \cref{ex:Helmholtz_ball} with $d = 2$ are obtained by evaluating the function given in \cref{eq:dtn_fct_helmholtz2D} 
at $ \lambda  = \lambda_{\ell} = (\ell / a)^2$. 
These numbers $ \dtn(\lambda_{\ell})$ with $a=1$ and $\wavenr =16$ are used as reference values for solving the minimization
problem \cref{eq:misfit}-\cref{eq:minimization problem}. 
If the diagonalization is justified then the reduced ansatz  should lead the to same 
results as the dense approach. 
For this test case exponentially decaying weights $w_{\ell} \sim \exp(- 2 \ell  / 3 )$ are chosen. 
%
In \cref{fig:learning_curves} the relative errors 
\begin{equation*}
  \frac{\vert \dtn(\lambda_{\ell}) -  \ddtn(\lambda_{\ell}) \vert }{ \vert \dtn(\lambda_{\ell}) \vert} 
\end{equation*}
for the dense and reduced ansatz are compared for $N \in \{0,2,4,6\}$. 
Apparently, both approaches yield the same results. 
Since the learned DtN based on the reduced ansatz offers the same accuracy as the dense approach and results additionally into a sparser exterior system it is in practice the method 
of choice. \par 

 The poles for the numerical experiment from the previous paragraph are shown 
 in \cref{fig:poles-plot}. They are located in the quadrant $\{z \in \mathbb{C} \mid \Re(z) > 0, \Im(z) > 0 \}$. 
 Additionally, the exact poles of the meromorphic extension of $\dtn^{\mathrm{hom}}$ are displayed. 
 \cref{fig:poles-plot} suggests that some of the learned poles seem to converge to the exact poles. 
 In particular, the analytic and learned poles with the smallest 
 imaginary part are placed at the same location for $N \in \{4,5,6\}$. 
 
 We point out that simply using the $N$ poles of $\dtn^{\mathrm{hom}}$ with smallest imaginary parts 
 to define $\ddtn$ yields a much worse approximation in this example. 
 



 \begin{figure}[htbp]
\centering
\subfloat[ Learning ]{ \label{fig:learning_curves} 
\includegraphics[scale=0.7]{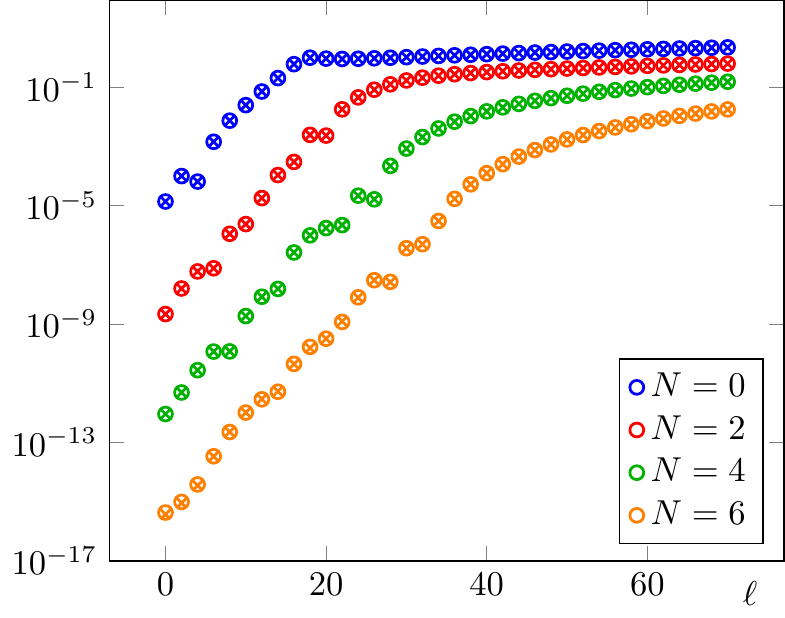}
}
\subfloat[ Poles ]{ \label{fig:poles-plot} 
\includegraphics[scale=0.95]{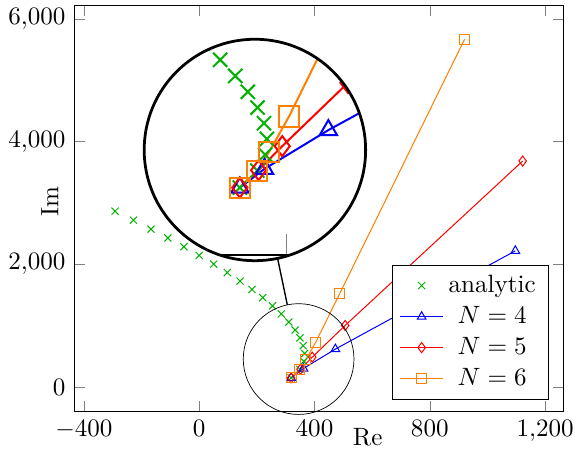}
}
\caption{ Left: Comparison of the relative error $  \vert \dtn(\lambda_{\ell}) -  \ddtn(\lambda_{\ell}) \vert / \vert\dtn(\lambda_{\ell}) \vert $ in terms of $N$ for solving the minimization problem for the analytic DtN numbers from \cref{eq:dtn_fct_helmholtz2D}. The circles `o' display the results for the dense ansatz of $\dstiffr$ and $\dmassr$ while the crosses `x' belong to the reduced ansatz. Note that all crosses lie perfectly inside the circles.  For better illustration only results for even $\ell$ are shown. Right: Poles for the diagonal ansatz when learning the DtN numbers from \cref{eq:dtn_fct_helmholtz2D}.    
}
\label{fig:learning-poles}
\end{figure}

\subsection{Convergence}\label{ssection:convergence}

The results shown in \cref{fig:learning_curves} suggest that the approximation quality of $ \ddtn$ improves exponentially with the number of infinite element \dofs~$N$.
A rigorous proof of this observation could serve as the basis of an error analysis for finite element simulations with learned infinite elements as transparent boundary condition.
However, note that no uniform rates in $\lambda$ for  $  \sup_{\lambda \geq 0} \vert \dtn(\lambda)  -  \ddtn(\lambda)  \vert $ can be expected due to the nontrivial asymptotic behavior of $\dtn(\lambda)$ at infinity. 
Hence, smoothness of the solution has to be exploited in order to reduce the approximation problem to finite intervals. 
Here, exponential convergence rates can be established as outlined below. \par 

The approximation space of learned infinite elements contains in particular functions of the form 
\begin{equation*}
 r_{n}(\lambda) = \sum\limits_{j=1}^{n} \frac{1}{\lambda - a_{j}}, \quad \{ a_{j} \} \subset \mathbb{C},
\end{equation*}
which are known in the literature as simple partial fractions (SPFs). 
It seems that the approximation properties of SPFs have been investigated mostly in the Russian literature. 
A recent overview is given in \cite{DKC18}. 
For our purpose the following result derived in \cite{K01}  is of major importance.
Let $K$ be a compact and rectifiable set of the complex plane and $f$ be analytic at its interior points and continuous on the closure. 
Denote by $SR_{n} $  the set of SPFs  with degree at most $n$, 
the supremum norm on $K$ by $\|\cdot\|_{C(K)}$, 
and the best uniform approximation on $K$ by
\begin{equation*}
\rho_{n}(f,K) \coloneqq \inf \{ \| f - r_{n} \|_{C(K)} \mid r_{n} \in SR_{n} \}.
\end{equation*}
Denote the same quantity for  polynomials $\mathbb{P}_{\mathbb{C}}^{n} $ of degree at most $n$ by
\begin{equation*}
E_{n}(f,K) \coloneqq \inf \{ \| f - r_{n} \|_{C(K)} \mid r_{n} \in \mathbb{P}_{\mathbb{C}}^{n} \}.
\end{equation*}
Then the weak equivalence 
\begin{equation}\label{eq:weak_equiv_poly_approx}
\rho_{n+1}(f,K) \simeq E_{n}\left(fe^{\theta(f;b,\cdot)},K \right)
\end{equation}
between the best approximation error with polynomials and SPFs holds.
Here, for a fixed  $b \in K$ the function $\theta(f;b,z) = \int\nolimits_{b}^{z} f(t) ~dt$ is an antiderivative 
for $f$ taken along a path contained in $K$ with length smaller than some $\tilde{d}>0$. 
The notation $ \simeq$ means bounded from above and below by a constant which depends (exponentially) 
on $\Vert f \Vert_{C(K)}$ and  $\tilde{d}$.  \par 

It remains to specialize this result to our setting and combine it with classical approximation theory for polynomials.
To this end, it is important to note that $\lambda \mapsto \dtn(\lambda) $ is smooth on $(0,\infty)$, cf.\ \cref{sec:structarg}.
This allows to invoke standard results (e.g.\ Jackson's theorem) to bound the best approximation error with polynomials appearing in \cref{eq:weak_equiv_poly_approx}.
Even the use of  approximation results for analytic functions on subsets of the complex plane is possible (see e.g. chapter 4.5 of \cite{W60}) thanks to the analytic extensions of $\dtn$ as discussed in 
\cref{sec:dtnmaps}.
Exponential convergence in $N$ on finite intervals follows. The derivation of explicit error estimates is planned to be the subject of another publication. \par 

\section{Tensor product discretizations and infinite elements}
\label{section:tensor_product_discr}
In this section we will relate the tensor product ansatz \eqref{eq:ansatz}, which is the starting point for our learned infinite elements, 
to existing successful transparent boundary condition, in particular infinite elements and demonstrate that the algebraic structure of \eqref{eq:ansatz} is shared by these methods.

\subsection{Tensor product infinite elements}
Infinite elements can be regarded as an extension of finite elements to infinite domains. 
Recall that a finite element is a triple $(T,\Pi,\Sigma)$ consisting of a closed bounded domain $T$ with sufficiently regular boundary, a finite dimensional space $\Pi$ of functions defined on $T$, and a set $\Sigma$ of linearly independent functionals with 
$\#(\Sigma)=\dim \Pi$. 
Infinite elements relax the first condition by allowing infinite domains.  
We say that an infinite element $E$ is the tensor product of a finite element $(T_{\Gamma},\Pi_{\Gamma},\Sigma_{\Gamma})$ on $\Gamma$ 
and another finite element $(T_r,\Pi_r,\Sigma_r)$ if $E=(T_r\times T_{\Gamma}, \Pi_r\otimes \Pi_{\Gamma},\Sigma_r\otimes \Sigma_\Gamma)$.   

If tensor product infinite elements of the same type, say with $\Pi_r=\operatorname{span}\{g_0,\dots, g_{N}\}$ are attached to each patch 
of the finite element grid on the coupling boundary $\Gamma$, 
then a basis of global shape functions corresponding to the exterior domain is also of tensor product structure:
\begin{equation}\label{eq:tensor_basis} 
\psi_{\alpha}(r, \hat{x} ) = g_{\mu}(r)\FEshape_{i}( \hat{x} ) , \quad \alpha  = (i,\mu)\in\{0,\dots,N\}\times\{1,\dots,n_{\Gamma}\}
\end{equation}
Here as in \cref{sec:derivation} $\FEshape_{i}$ are finite element basis functions on the coupling interface $\Gamma$. 
To mold the arising linear systems into a convenient form a special partition of the \dofs~is employed. 
In the following,  $\left(u_{\alpha} \right) $ is considered as a block-vector where each block contains  all unknowns on the coupling interface labeled by $i=0,1,..,n_{\Gamma}$ for fixed $\mu$:
\begin{equation}\label{eq:dof_partition}
  \left(\uh_{\alpha}\right) = \Big(  \overbrace{(\uh_{\cdot,\mu=0})}^{ \Gamma \text{-\dof}}, \overbrace{(\uh_{\cdot,\mu=1}), \ldots, (\uh_{\cdot,\mu=N})}^{E \text{-\dof}}  \Big)^{\top} 
= (\uh_{\Gamma},\uh_{E})^{\top}. 
\end{equation}
Additionally, in order to obtain a convenient form of  $\dDtN$, it is assumed that the $\mu = 0$-\dof~of the infinite elements is associated with the coupling interface (label $\Gamma$) while the $\mu >0$-\dofs~belong to the exterior (label $E$).

For a variational formulation reflecting the structure \eqref{eq:separable_bvp2} of the exterior PDE, such an ordering of the \dof~leads 
to the tensor product structure \eqref{eq:ansatz}. Rather than formalizing this statement, we will 
illustrate it at the example of perfectly matched layers, which will also serve as a reference in our numerical examples. 
We emphasize that perfectly matched layers are 
typically not derived as infinite elements, but tensor product discretizations of perfectly matched layers may be interpreted as 
infinite elements as discussed below. 

\subsection{Perfectly Matched Layers}\label{sec:TP_PML}
Perfectly Matched Layers (PML) as first introduced in \cite{B94} implement artificial absorbing layers next to the boundary of the exterior domain and truncate the exterior domain rendering it bounded.
In our setting of a separable geometry (cf.\ also \cite{CM98} for the PML formulation in curvilinear coordinates)  
the crucial idea can be described as 
stretching of the $r$ coordinate into the complex plane by means of a transformation
\begin{equation}\label{eq:complex_radius}
\tilde{r}(r) =  r + \int\nolimits_{a}^{r} i \sigma(t) \mathrm{d}t,
\end{equation}
where $\sigma$ is a positive function describing unisotropic absorption in propagation direction.  
This requires the coefficients involved in $\stiffr$ and $\massr$ to have analytic extensions. 
In the new $\tilde{r}$ coordinates radiating solutions decay exponentially whereas incoming solutions grow exponentially. 
Therefore, the computational domain can be truncated by restricting $\tilde{r}$ to an interval $[a,R]$. 

Assume that $|\det\Psi(r,\hat{x})| = r^m$, $(\calB u)(r)= \varphi_1(r) u(r)$, and 
$\calA$ is in divergence form 
$(\calA u)(r) = r^{-m}\partial_r (r^m\phi_3(r)\partial_r u(r)) + \varphi_2(r)u(r)$ and  
with analytic functions $\varphi_j$. This covers Examples \ref{ex:Helmholtz_ball} and \ref{ex:Helmholtz_waveguide}. 
Set $\tilde{u}(r,\hat{x}):= u(\Psi(\tilde{r}(r),\hat{x}))$ and analogously for analytic, rapidly decaying test functions $v$. 
Since $(\partial_r u)(\Psi(\tilde{r}(r),\hat{x})) = \partial_r \tilde{u}(r,\hat{x}) / \tilde{r}'(r)$ by the chain rule, a contour deformation 
in the standard variational formulation of \eqref{eq:bvp_ext} and truncation of the interval $[a,\infty)$ to $[a,\eta]$ leads to the variational formulation 
\begin{align*}
\int_{\Gamma}\int_a^{\eta} \left[\tilde{\varphi}_3\frac{\partial_r\tilde{u}\partial_r\tilde{v}}{(\tilde{r}')^2} 
+ \tilde{\varphi}_2 \tilde{u}\tilde{v} + \tilde{\varphi}_1\nabla_{\hat{x}}\tilde{u}\cdot \nabla_{\hat{x}}\tilde{v}\right] \tilde{r}^m \tilde{r}' \,\mathrm{d}r \,\mathrm{d}\hat{x} =0
\end{align*}
with $\tilde{\varphi}_j:=\varphi\circ\tilde{r}$, the unknown function $\tilde{u}$ satisfying $\tilde{u}(a,\cdot)=u_0$ and test functions
$\tilde{v}$ satisfying $\tilde{v}(a,\cdot)=0$. 

It is natural -- although not necessary -- to use a tensor product infinite elements for the discretization of the 
PML layer $[a,\eta]\times \Gamma$. In this case one may formally combine all one-dimensional finite elements in propagation direction $r$ to 
one infinite element. This shows that tensor product PMLs may also be interpreted as infinite elements. 
Expanding $\tilde{u}$ and $\tilde{v}$ in the basis \eqref{eq:tensor_basis} leads 
to a system of equations of the form \eqref{eq:ansatz} with the matrices 
 \begin{equation}\label{eq:Ls_integrals}
 \dstiffr_{\mu \nu}  = \int\nolimits_{a}^{\eta}{  \left[\tilde{\varphi}_3 \frac{ \partial_r g_{\mu}\, \partial_r g_{\nu}}{(\tilde{r}')^2}  
 + \tilde{\varphi}_{2} g_{\mu} g_{\nu}  \right]  \tilde{r}^{m}\tilde{r'} \, \text{d} r  } \quad\text{and}\quad    
 \dmassr_{\mu \nu}  = \int\nolimits_{a}^{\eta}{  \tilde{\varphi}_{1}  g_{\mu} g_{\nu} \, \tilde{r}^{m}\tilde{r'}\, \text{d} r  }.
 \end{equation}


\subsection{Other infinite elements}\label{sec:otherIE}
Typically, there is no straightforward variational formulation of time harmonic wave equations in exterior domains. 
The reasons are that neither the solutions themselves nor their gradients are square integrable and that the radiation condition 
is difficult to incorporate. Therefore, whereas finite elements specify a generic 
finite dimensional function space for a given mesh, infinite elements are usually intimately linked to  a specific PDE and/or a type of variational formulation.  
In classical infinite elements for the Helmholtz equation, shape functions are chosen according to  asymptotic expansions of the exterior solution, and solution and test function spaces are equipped with 
different weights (cf.\ \cite{DG:98}). As discussed above, PMLs use standard shape functions, but 
a transformation of the exterior PDE or its variational formulation. 
In a similar vein,  Hardy space infinite elements (HSIE) \cite{HN09} are based on a transformation 
of the PDE to a Hardy space in the propagation direction in order to incorporate the radiation condition. 
The matrices $\dstiffr$ and $\dmassr$ are obtained by a Galerkin approximation of the transformed 
variational formulation with respect to a basis of the Hardy space. 

\subsection{Learned infinite elements}
\begin{figure}[htbp]
  \centering
  \vspace*{-0.2cm}
\includegraphics[scale=0.5]{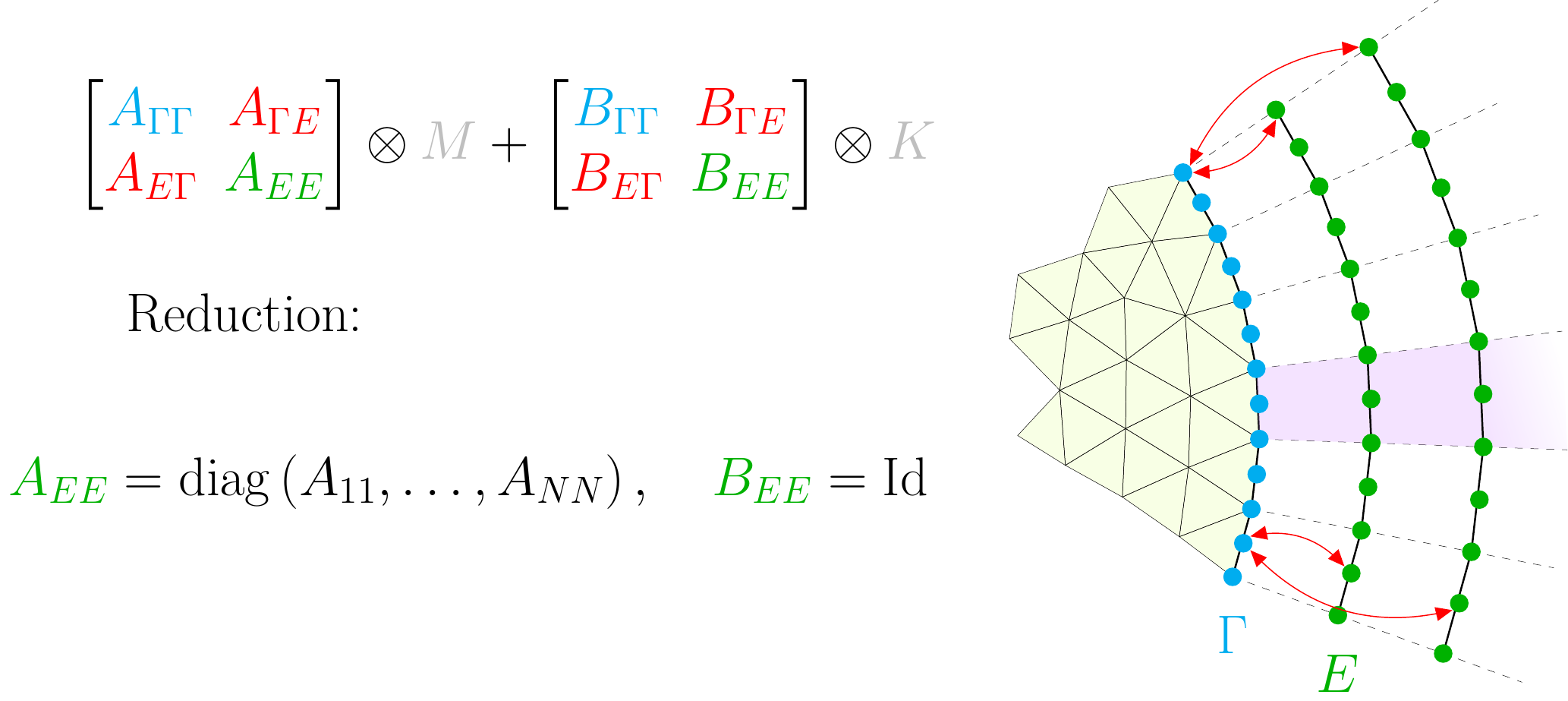}
  \vspace*{-0.2cm}
\caption{Schematic illustration of learned infinite elements. The violet region indicates one such infinite element. 
The first infinite element \dofs~lie on the transparent boundary $\Gamma$ and are associated with the matrices $\dstiffr_{\Gamma \Gamma}$ and $\dmassr_{\Gamma \Gamma}$.
	Additional infinite element \dofs~in the exterior $E$ are tensorized with copies of the FEM discretization of $(\text{Id}_{\Gamma},-\Delta_{\Gamma})$ on  $\Gamma$. The reduction step of \cref{ssection:diag_and_poles} decouples exterior \dofs~(of different rings) from each other. The exterior $E$ couples to $\Gamma$ (and thereby to $\Omega_{\text{int}}$) by means of  $\dstiffr_{\Gamma E }, \dmassr_{\Gamma E },$ and  $\dstiffr_{E \Gamma}, \dmassr_{E \Gamma}$.    }
\label{fig:dofs_sketch}
\end{figure}
In contrast to classical infinite elements where the matrices $\dstiffr$ and $\dmassr$ in \eqref{eq:ansatz} are computed from 
some discretization of the (possibly transformed) exterior PDE,  
with learned infinite elements we only access the exterior PDE via the associated Dirichlet-to-Neumann map $\DtN$. 
Since the matrices $\dstiffr$ and $\dmassr$ will be learned from $\DtN$,  the choice of the element 
$(T_r,\Pi_r,\Sigma_r)$ is irrelevant, only the dimension of $\Pi_r$ counts. We will simply choose $T_r$ as discrete set 
$T_N:= \{0,1,\dots,N\}$. With $\Pi_N:=\mathbb{C}^{T_N}$ and the canonical dual basis 
$\Sigma_N:=\{F_0,\dots, F_N\}$ given by 
$F_j(g):=g(j)$ for $g\in \Pi_N$ and $j\in T_N$, tensor product infinite elements of order $N\in \mathbb{N}\cup\{0\}$ are defined by 
\[
\left(T_N\times T_{\Gamma},\Pi_N\otimes \Pi_{\Gamma},\Sigma_N\otimes \Sigma_{\Gamma}\right)
\]
for finite elements $(T_{\Gamma},\Pi_{\Gamma},\Sigma_{\Gamma})$ on $\Gamma$. This is illustrated in \cref{fig:dofs_sketch}. 
Before the learning step discussed in \cref{sec:approx_DtN} they are more generic, and after the learning step they are more 
specific than classical inifinite elements. 
Moreover, as learned infinite elements are based on the same underlying algebraic structure \cref{eq:ansatz} as many other transparent boundary conditions like tensor-product PMLs well-established methods for solving the linear systems can be applied.
%
%

\section{Numerical results}
\label{sec:numexp}

This section presents a variety of numerical experiments to illustrate the performance of learned infinite elements. 
Firstly, problems with a homogeneous exterior domain are considered to facilitate a comparison with other transparent boundary conditions.
Afterwards, increasingly more inhomogeneous exterior domains are tackled leading finally to a realistic test case for the solar atmosphere.
The reduced ansatz for the learned infinite element matrices is used by default. 
All experiments involving finite elements have been implemented using the library \texttt{Netgen/NGSolve}, see \cite{JS97,JS14}. 
The order of the finite element discretization is denoted by $p$.
A Docker image providing all software, data and instructions to reproduce the experiments is available at \cite{GRO_HLP21}.
For getting a quick first impression of the new method we also offer a couple of notebooks \cite{NotebooksHLP} which can 
be run live in the browser.

\subsection{Scattering of plane wave from a disk}\label{ssection:ex_plane_wave_hom}

Consider a plane wave $g = \exp( i \wavenr x)$ which is incident on a disk with radius $\RScatter=1/2$. For sound-soft scattering, i.e.
  $
   u  = g \text{ at } r = \RScatter,
  $
the solution of the Helmholtz equation is given by (see the appendix for derivation)
\begin{equation}\label{eq:PlaneWaveHomSolution}
u(r,\varphi) = \frac{ H^{(1)}_{0}(k r) }{ H^{(1)}_{0}(k \RScatter)  }  J_{0}( \wavenr \RScatter)  + \sum\limits_{\ell = 1}^{\infty} \frac{ H^{(1)}_{\ell}(k r) }{ H^{(1)}_{\ell}(k \RScatter)  }  2 i^{\ell} J_{\ell}(\wavenr \RScatter) \cos(\ell \varphi). 
\end{equation}
Here $J_{n}$ and $H_{n}^{(1)}$ denote the Bessel and Hankel function of the first kind of order $n$ respectively. \par 

The problem is discretized on an annulus $ \Omega_{\mathrm{int}} = \{ x \in \mathbb{R}^{2} \mid  \RScatter \leq \Vert x \Vert \leq a \}$ using the Dirichlet boundary condition 
 at $r = \RScatter$ and learned infinite elements at $r = a$. The resolution is increased by raising the polynomial degree while the mesh remains fixed.   
 As discussed in \cref{ssection:minimization_problem} the weights in the misfit function should reflect the regularity of the expected solution.
 This motivates the choice $w_{\ell} \sim \vert H^{(1)}_{\ell}(\wavenr a) / H^{(1)}_{\ell}(\wavenr \tilde{r}(\wavenr)) \vert$,
  where $ \tilde{r}(\wavenr) = \min \{ \RScatter,\RScatter \cdot \wavenr/16 \} \leq \RScatter$ describes the radius of the smallest ball into which the solution is expected to extend analytically into the scatterer.
 \par 
 
 The relative error on $ \Omega_{\mathrm{int}}  $ for increasing number $N$ of infinite element \dofs~is shown in \cref{fig:plane-wave-disc-relerr}. The convergence in $N$ is extremely fast. For $p=6$ and $\wavenr = 16$  the spatial accuracy is reached with only $N=3$ \dofs~as \cref{fig:plane-wave-disc-relerr-p} shows. \cref{fig:plane-wave-disc-relerr-k} demonstrates that the method delivers good results for a wide range of wavenumbers. \par 
 
The performance of most transparent boundary conditions is known to depend on the number of wavelength that fit between scatterer and the coupling boundary. 
The further away the coupling boundary the more the highly oscillatory components of the solution have already decayed when reaching it. 
Hence, it suffices to focus attention only on a couple of dtn numbers associated with slowly propagating modes, i.e.\ to  fit 
 $\vert  \dtn^{\mathrm{hom}}(\lambda_{\ell}) -   \ddtn(  \lambda_{\ell}) \vert  $ for small $\ell$ well, to obtain 
 an accurate solution when $\vert a-\RScatter \vert $ is large.  
 In this regard, the choice $\vert a-\RScatter \vert = 1/2 $ considered for the previous experiment is rather generous. 
Therefore, an additional experiment is performed in which the coupling boundary is moved progressively closer to the scatterer. 
To this end, $\RScatter=1/2$ is fixed and $a$ is decreased so that $\vert a-\RScatter \vert  \in [1/2,1/4,1/8,1/16] $. 
The results for $\wavenr = 16$ are displayed in \cref{fig:plane-wave-coupling-radius}. 
As expected, the convergence rate of the relative error slows down as the coupling boundary approaches the scatterer.
Nevertheless, the convergence is still exponential.
\cref{fig:plane-wave-a-uniformerror} also shows that the weighted uniform error of the dtn modes is proportional to the relative $L^2$-error. 
This provides an a priori estimate on the accuracy of learned IEs in the sense that the error stemming from the DtN approxmation can be measured directly after the minimization problem \cref{eq:minimization problem} has been solved. 

 \begin{figure}[htbp]
\centering
\subfloat[ $\wavenr = 16$ ]{ \label{fig:plane-wave-disc-relerr-p} 
\includegraphics[scale=0.735]{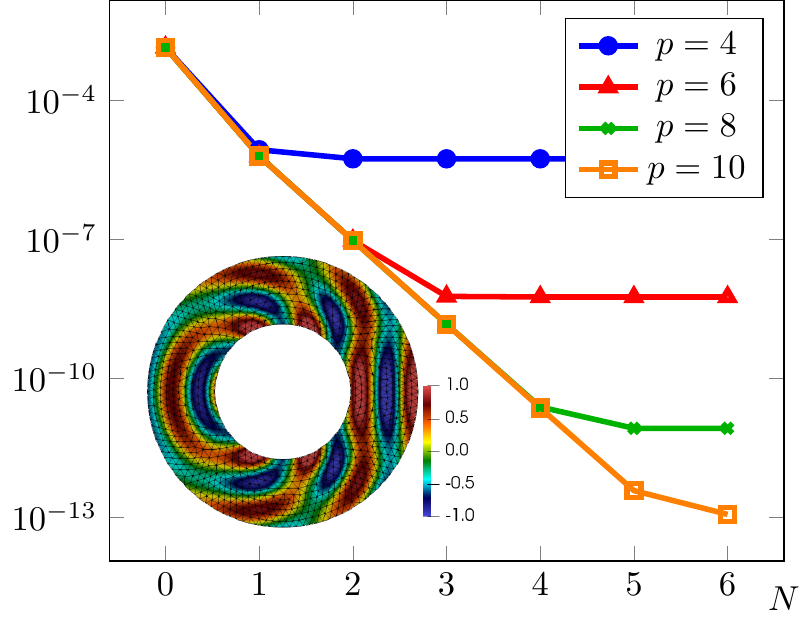}
}
\subfloat[ $ p = 10$ ]{ \label{fig:plane-wave-disc-relerr-k}
\includegraphics[scale=0.735]{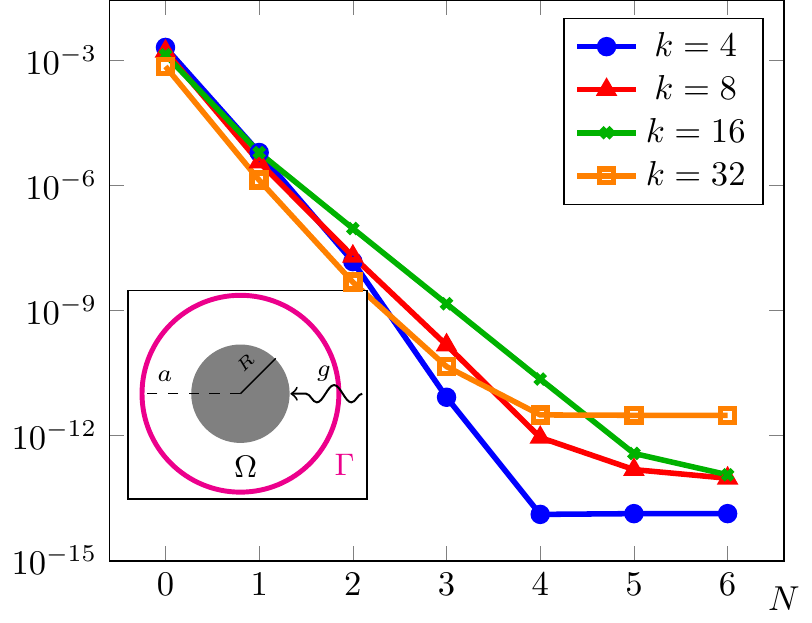}
 }
\caption{ Comparison of relative error $\Vert u-u_h \Vert_{L^2(\Omega_{\mathrm{int}})} /\Vert u \Vert_{L^2(\Omega_{\mathrm{int}})}  $ for the scattering of a plane wave from a disk. Additionally, the real part 
	 of the reference solution for $\wavenr = 16$ is shown in the left figure.}
\label{fig:plane-wave-disc-relerr}
\end{figure}

 \begin{figure}[htbp]
\centering
\subfloat[  $\Vert u-u_h \Vert_{L^2(\Omega_{\mathrm{int}})} /\Vert u \Vert_{L^2(\Omega_{\mathrm{int}})}  $  ]{ \label{fig:plane-wave-a-relerr} 
\includegraphics[scale=0.735]{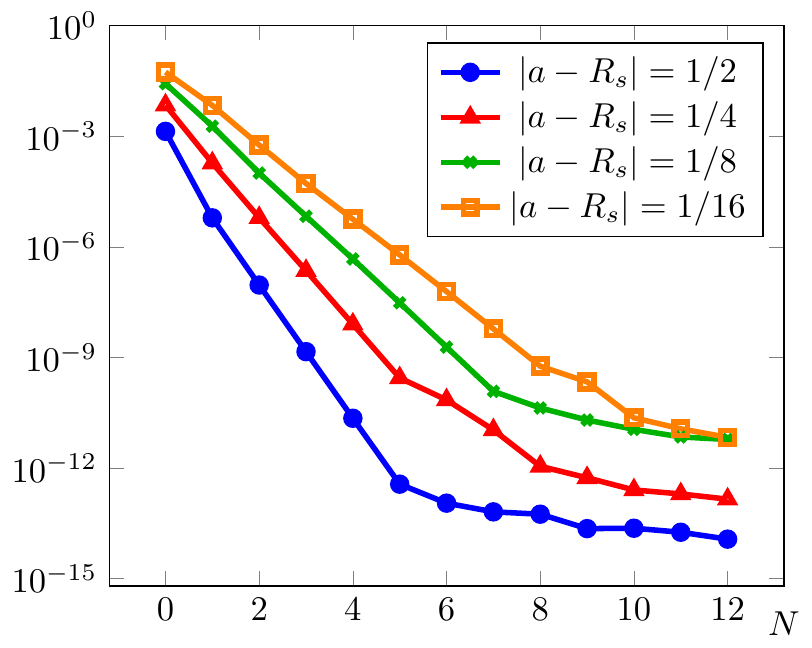}
}
\subfloat[  $  \sup_{\ell}   \vert w_{\ell}(  \dtn^{\mathrm{hom}}(\lambda_{\ell}) -   \ddtn(  \lambda_{\ell} ) ) \vert  $ ]{ \label{fig:plane-wave-a-uniformerror}
\includegraphics[scale=0.735]{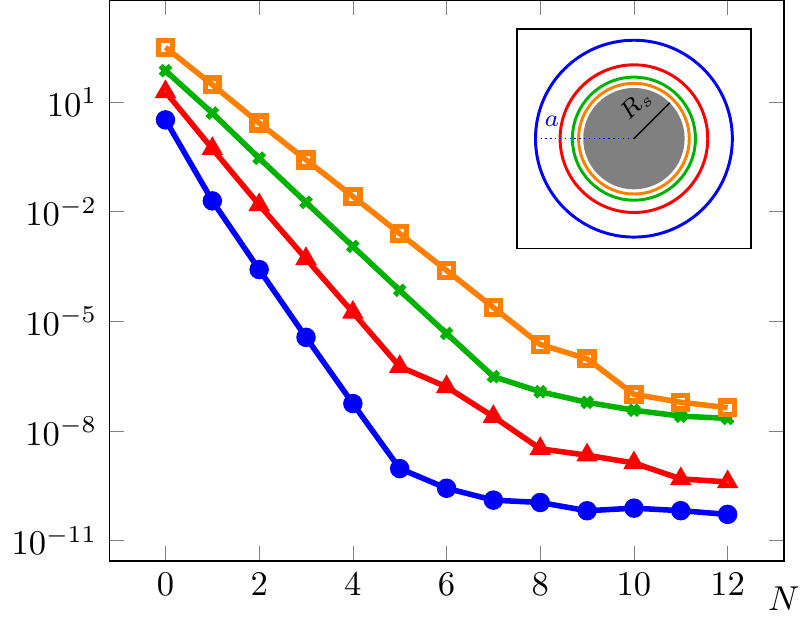}
 }
\caption{ Influence of the distance $\vert a-\RScatter \vert $ between coupling boundary and scatterer on the performance of learned IEs for $\wavenr = 16$ and $p=12$. Relative $L^2$-error on $\Omega_{\mathrm{int}}  $ on the left and weighted uniform 
error of the dtn modes on the right. }
\label{fig:plane-wave-coupling-radius}
\end{figure}

\subsection{Point source inside unit disk}\label{ssection:point_source}

The fundamental solution $\Phi(x,y) = (i/4)H_{0}^{(1)}( \wavenr \Vert x-y \Vert )$ fulfills
\begin{equation*}
- \Delta_{x}{\Phi(x,y)} - \wavenr^2 \Phi(x,y) = \delta(x-y)  := \delta_{y}(x),
\end{equation*}
where $\delta_{y}$ is the Dirac distribution at $y \in \mathbb{R}^2$.
The point source $y$ is placed inside a disk with radius $a= 1$.
 In this example $\nabla_{x} \Phi(x,y) \cdot \nu(x)$ is directly used as Neumann data for the exterior problem realized e.g. by the learned infinite elements.
 In particular, there is no interior discretization. 
 The quality of the solution is assessed by measuring the relative $L^2$-error in the Dirichlet data on $\Gamma$.
 The difficulty of the problem increases with shrinking distance of the source to the boundary as this adds more significant modes to the solution. 
 The weights for the minimization problem are chosen accordingly  as $w_{\ell} \sim \vert H^{(1)}_{\ell}(\wavenr a) / H^{(1)}_{\ell}(\wavenr  \Vert y \Vert ) \vert$.
 To investigate the influence on the transparent boundary condition the source positions $y = (0.5,0.0)$ and $y = (0.95,0.0)$ are considered in the experiments.
     \par 

To evaluate the performance of learned infinite elements a comparison with other transparent boundary conditions is presented.  
%
For the experiments below a quadratic absorption coefficient $\sigma(t) = (C / \wavenr) (1 / ( \eta - a) ) ( (t-a)/ (\eta-a) )^2$ was used in \eqref{eq:complex_radius}. 
The parameters $C = 40$ and $\eta  - a = 0.02$ 
have been found to yield good results. The implementation uses finite elements of order four with uniform mesh refinements.  \par 

Tuning of the PML and discretization parameters in this approach is tedious and may lead to suboptimal results. 
A better strategy has been proposed in \cite{CL06}.
Following \cite{CM98} the medium parameter $\sigma_{0}$ in $\sigma(t) = \sigma_{0} ( (r-a)/(\eta-a))^{m}, m \in \mathbb{N}$ and the thickness $\eta - a$
in the complex stretching of the PML are determined through an a posteriori error analysis.
This is achieved by splitting the error into a finite element discretization error and a term describing the modeling error introduced by the PML (see Theorem 3.1. in  \cite{CL06}).
The contribution from the PML decreases with the exponentially decaying factor 
\begin{equation*}
\exp{  \left( -  \wavenr \Im(\tilde{\eta}) \left( 1 - \frac{a^2}{ \vert \tilde{\eta} \vert^2 }  \right)^{1/2} \right) }, \quad \tilde{\eta} =   \eta + i \frac{\sigma_{0}}{m+1} (\eta - a).
\end{equation*}
By choosing $m=2$, $\sigma_{0} = 4.5$, and $\eta = 2.5$ the PML error is of the order of the machine precision for the considered example. 
With the PML parameters being fixed an adaptive mesh refinement strategy based on a standard residual error estimator is employed 
to reduce the discretization error. Note that this leads to an unstructured mesh. 
In particular this method is not of the tensor product form \eqref{eq:ansatz}.
For the experiments presented here finite elements of order six have been chosen. \par 

As a last candidate we consider the Hardy space infinite element method (HSIE), cf.~\cref{sec:otherIE}. This method has a parameter which is usually called $\kappa_{0}$. For the experiments below $\kappa_{0} = a \wavenr$ 
was used for $y = (0.5,0)$ while a larger value of $\kappa_{0} = 3 a \wavenr$ turned out to be beneficial for $y = (0.95,0)$.

In \cref{fig:ptsource-relerr} the different methods are compared in terms of the 
number of \dofs~(`ndof') and number of nonzero entries (`nze') of the resulting linear system.
\begin{itemize}
\item The lower panel displays the results for a source 
far away from the boundary. In this case both methods based on infinite elements outperform the PML approaches. 
Although the HSIE already works very well for this problem, the learned approach yields a further improvement. 
The reduced ansatz for the infinite elements matrices (`Learned reduced') results in a sparser system matrix but 
makes the optimization problem a bit harder to solve. 
As a result, it may happen that the minimization gets stuck in a local minimum or makes slower progress compared to the dense ansatz.
However, so far this has only been observed in the very high accuracy range.
 In \cref{fig:ptsource-far-ndof} and \cref{fig:ptsource-far-nze} this occured for relative accuracy better than $10^{-12}$.  
\item The upper panel gives the results for the source close to the boundary. Here, the performance of the HSIE degrades strongly. Both PML approaches perform better in terms of nonzero entries of the linear system. Among the PML methods, the adaptive discretization achieves substantially better results than the tensor product PML. In particular, it leads to very sparse matrices. Still, the learned infinite elements are able to improve on this.
\end{itemize}

 \begin{figure}[htbp]
\centering
\subfloat[ $y = (0.95,0.0)$ ]{ \label{fig:ptsource-close-ndof} 
\includegraphics[scale=0.75]{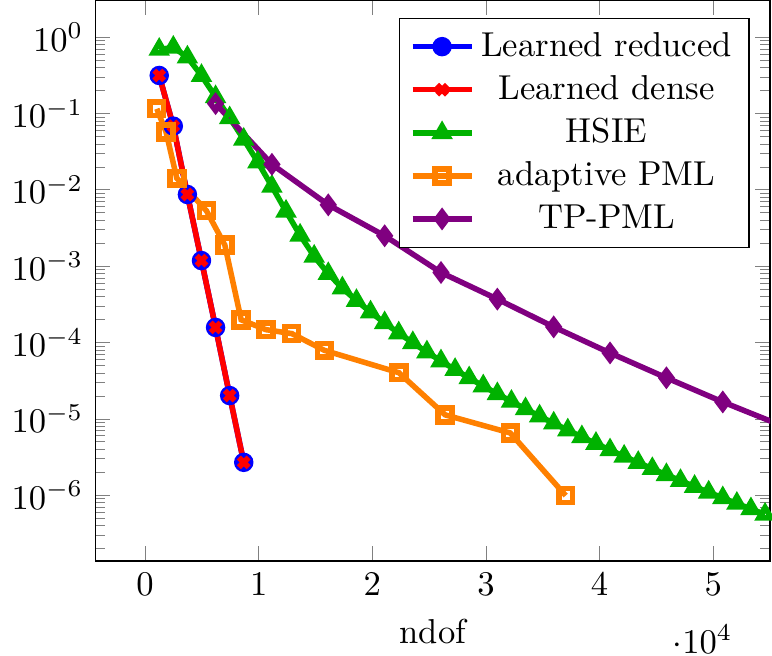}
}
\subfloat[ $y = (0.95,0.0)$  ]{ \label{fig:ptsource-close-nze}
\includegraphics[scale=0.75]{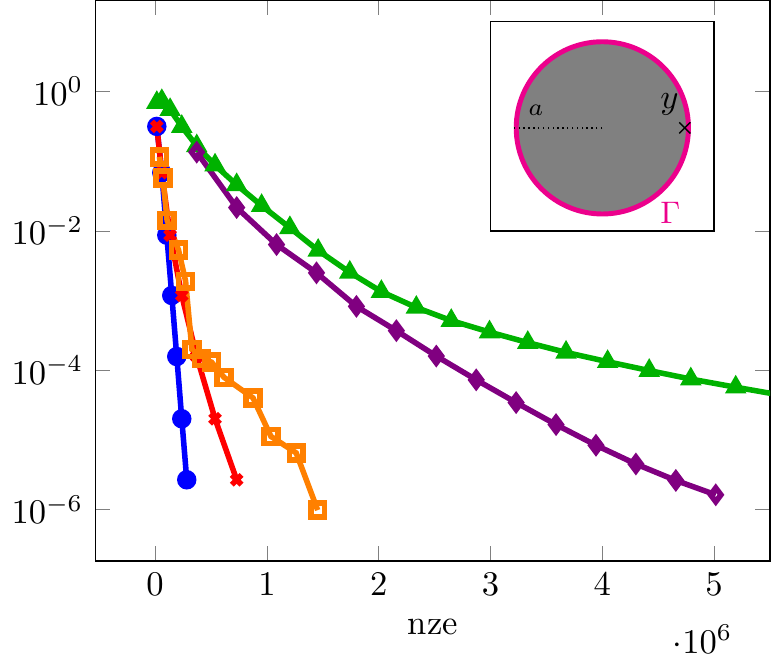}
 } \\
\subfloat[ $y = (0.5,0.0)$ ]{ \label{fig:ptsource-far-ndof} 
\includegraphics[scale=0.75]{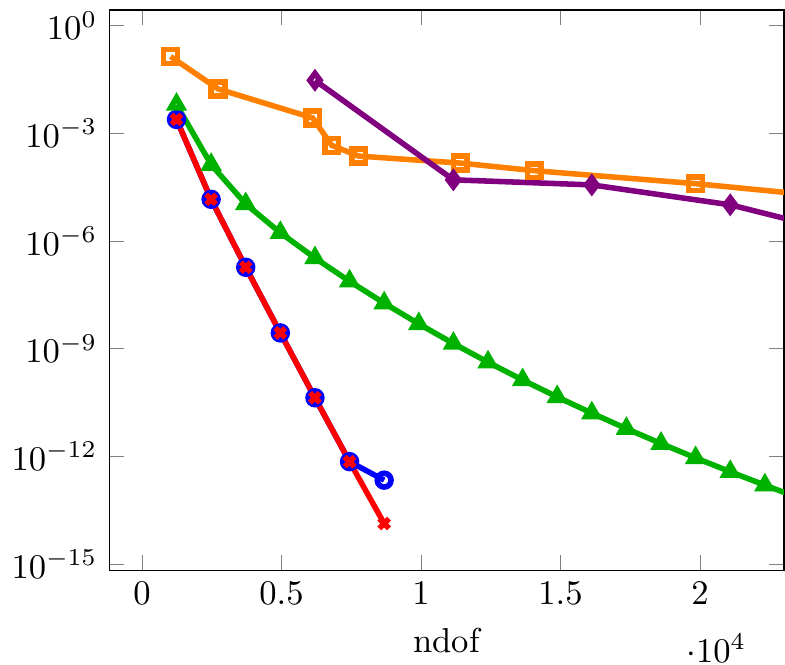}
}
\subfloat[ $y = (0.5,0.0)$  ]{ \label{fig:ptsource-far-nze}
\includegraphics[scale=0.75]{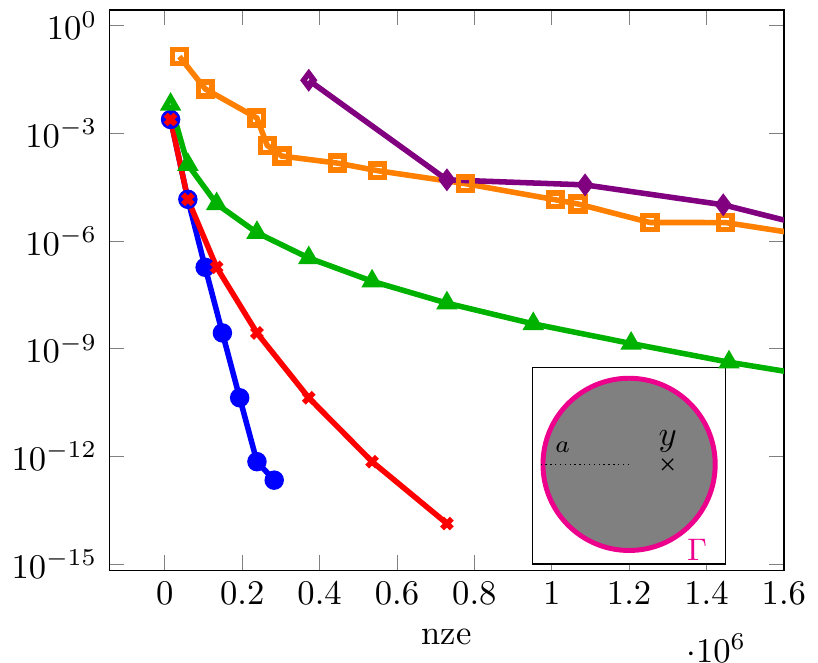}
 }
\caption{ Comparison of relative error $\Vert u-u_h \Vert_{L^2(\Gamma)} /\Vert u \Vert_{L^2(\Gamma)}  $ for the point source inside the unit disk at position $y$ with $\wavenr=16$.  }
\label{fig:ptsource-relerr}
\end{figure}

\subsection{Jump in exterior wavenumber}\label{ssection:jump_numexp}

 \begin{figure}[htbp]
\centering
\subfloat[ $\wavenr_{\infty} = \wavenr_{I} = 16 $  ]{ \label{fig:omega_inf_16_real}
\includegraphics[scale=0.5]{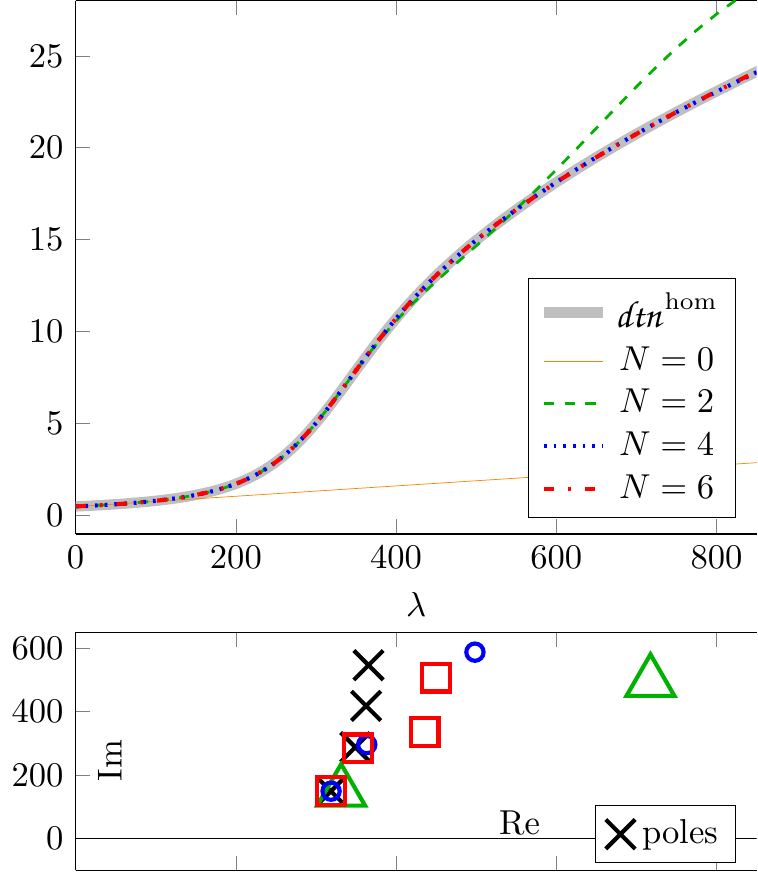}
 } 
\subfloat[  Relative $L^2$-error ]{ \label{fig:jump_relerr} 
\includegraphics[scale=0.5]{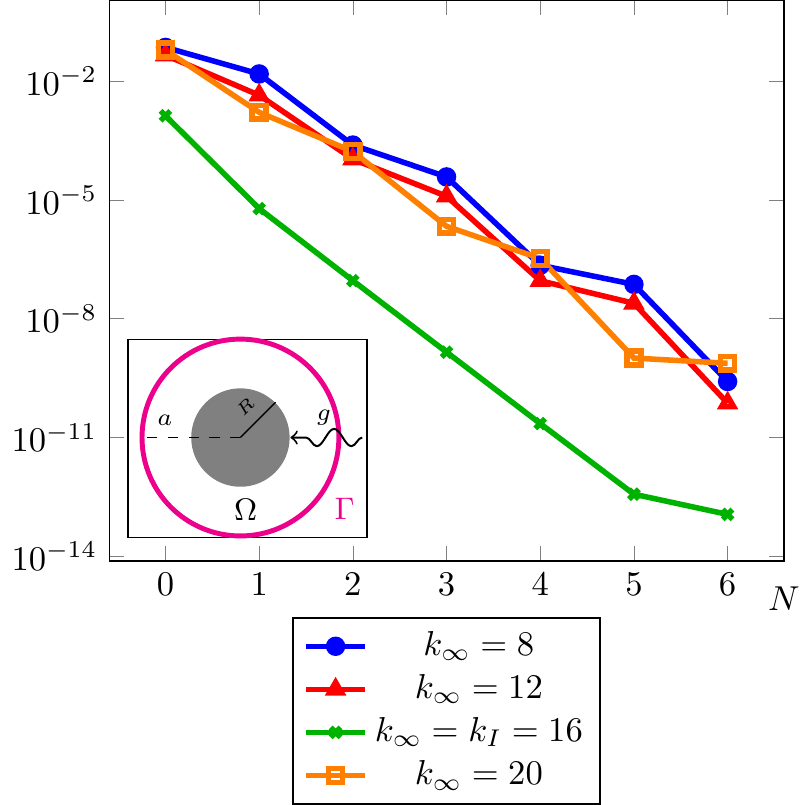}
}
\subfloat[ $\wavenr_{\infty} = 8$  ]{ \label{fig:omega_inf_8_real}
\includegraphics[scale=0.5]{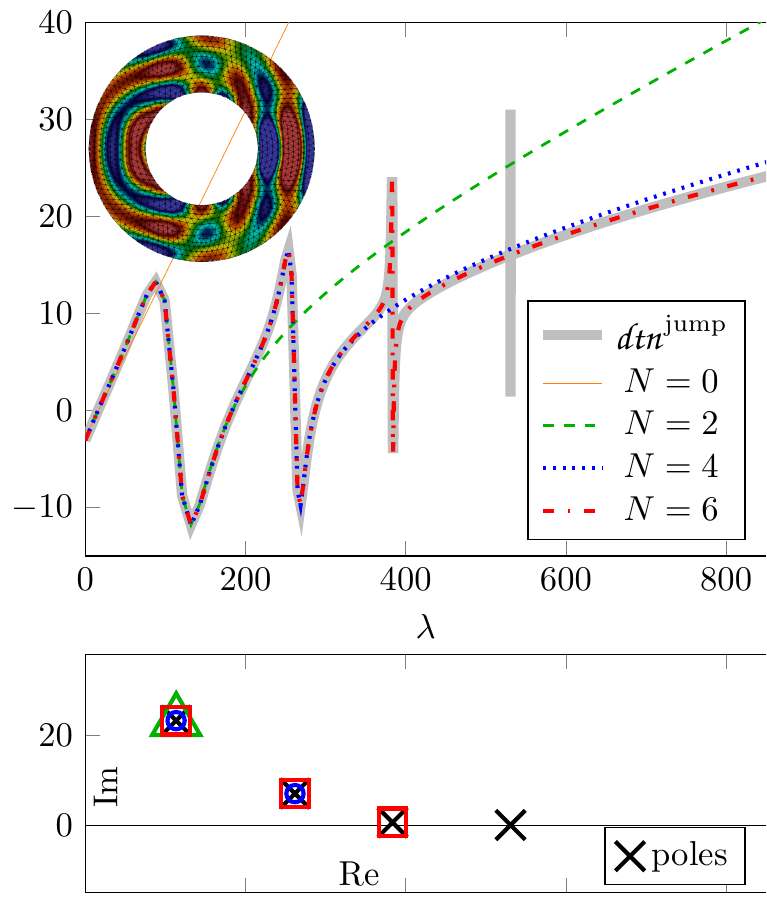}
 }
\caption{ Approximation results for the real part of $\dtn^{\mathrm{jump}}$ from Example \ref{ex:jump}
with a discontinuous exterior wavenumber jumping from $\wavenr_{I} = 16$ to $\wavenr_{\infty} = 8$ shown in \cref{fig:omega_inf_8_real} compared
to the $\dtn^{\mathrm{hom}}$ function of the homegeneous medium, which is recovered for the case $\wavenr_{I} = \wavenr{\infty} = 16 $, in \cref{fig:omega_inf_16_real}.
In the lower panel we also compare the poles of $\dtn$ (black crosses)  to the poles of $\ddtn$ (colors according to legend). Only the poles of $\ddtn$ close to
	 the real axis are displayed. The relative $L^2$-error for the scattering of a plane wave is shown in the central figure. 
	 }
\label{fig:dtn_omega_inf_real}
\end{figure}

For the numerical experiments $a = 1$, $\RJump=2$ and $\wavenr_{I} = 16$ are chosen in \cref{ex:jump}. 
In \cref{fig:omega_inf_16_real} and \cref{fig:omega_inf_8_real} the approximation $\ddtn(\lambda) $ obtained with the reduced ansatz \cref{eq:zeta_learned_diag} for different $N$ is shown.
While \cref{fig:omega_inf_16_real} shows the case $\wavenr_{\infty} = \wavenr_{I} = 16$ in which there is no jump and the $\dtn^{\mathrm{hom}}$ function of the homogeneous medium is recovered, 
\cref{fig:omega_inf_8_real} displays a case in which the wavenumber jumps drastically.  
Additionally, the poles of the meromorphic extension of $\dtn$ and the learned poles (see \cref{eq:learned_poles}) are displayed.
The approximation results for the case $\wavenr_{\infty} = 8$ shown in \cref{fig:omega_inf_8_real} are astonishing.
The rapid improvement as $N$ increases can be explained by considering the positioning of the learned poles. 
For $N=2$ the optimization routine has placed one of the learned poles at the position of the first exact pole. 
This captures the first peak of $\dtn^{\mathrm{jump}}$. 
For $N=4$ the next pole has been fitted and accordingly the next peak is covered by the approximant. 
It is remarkable that for $N=6$ the peak can be captured accurately even though it is not resolved 
by the sampling of the $\lambda$-axis in the minimization problem. 
Similar results shown in the appendix are obtained for $\wavenr_{\infty} = 12$ and $\wavenr_{\infty} = 20$. 
This clearly demonstrates the benefits of using rational functions for approximating the $\dtn$. \par 

In analogy to \cref{ssection:ex_plane_wave_hom} we consider the scattering of a plane wave from a disk with radius $\RScatter=1/2$. 
An analytic reference solution for this problem, displayed in \cref{fig:omega_inf_8_real} for $\wavenr_{\infty} = 8$, is derived in the appendix.
For the case $\wavenr_{\infty} = \wavenr_{I} = \wavenr$ it reduces to the solution \cref{eq:PlaneWaveHomSolution} for the homogeneous problem, whose real part was shown in \cref{fig:plane-wave-disc-relerr-p}. 
The weights are chosen as  $w_{\ell} \sim \vert u_{\ell}(a) / u_{\ell}( \tilde{r}(\wavenr_I))  \vert$, where 
$u_{\ell}(r) = A_{\ell} J_{\ell}(\wavenr_I r )  + B_{\ell} Y_{\ell}(\wavenr_I r )$ is the solution of the radial equation for $r < \RJump $ and 
$\tilde{r}(\wavenr_I)$ is as in \cref{ssection:ex_plane_wave_hom}.  
The relative $L^2$-error on the computational domain $ \Omega_{\mathrm{int}}$  is shown in \cref{fig:jump_relerr} for different $\wavenr_{\infty}$. 
Although the error is a bit larger when a jump actually occurs, i.e $\wavenr_{\infty} \neq \wavenr_{I}$, the convergence is nevertheless exponential.


\subsection{Waveguide}\label{ssection:waveguide}

In this section we consider the waveguide introduced in \cref{ex:Helmholtz_waveguide}.
Setting $\tilde{\Gamma} = [0,\pi]$ and letting the waveguide start at the origin the solution shall be computed in $\Omega_{\mathrm{int}}  = [0,a) \times \tilde{\Gamma}$.
The geometry is sketched in \cref{fig:dtn-guide}.
It is convenient to choose Dirichlet boundary conditions on $\partial \Omega_{\mathrm{int}} \setminus ( \{a \} \times \tilde{\Gamma})$ so that 
the exact solution is given by 
\[ u = \sum\limits_{\ell = 0}^{L} \sin\left(y \sqrt{ \lambda_{\ell} }\right) \exp\left(i x \sqrt{\wavenr^2- \lambda_{\ell}}\right),  \]
for $\lambda_{\ell} = \ell^2$.
For the numerical experiment the parameters $a = 2 \pi$, $\wavenr = 16.5$ and $L = 33$  are chosen. 
Note that \cref{eq:waveguide_assump} holds. 
The weights are chosen to cover all propagating waveguide modes equally well and emulate the decay of the evanescent modes, i.e. $w_{\ell} \sim 1 $ for $\lambda_{\ell} \leq \wavenr^2$ and $w_{\ell} \sim \vert \exp{(ia \sqrt{\wavenr^2- \lambda_{\ell}}}) \vert $ else. \par  

In \cref{fig:waveguide} results for the approximation of $\Im(\dtn^{\mathrm{guide}})$ are shown. 
(Note the $\Re(\dtn^{\mathrm{guide}})$ is zero for most relevant modes.) 
As one might expect, the kink of $\dtn^{\mathrm{guide}} $ is hardest to resolve.
Interestingly, the learned poles are observed to accumulate in the part of the complex plane lying to the right of the kink, which
is probably beneficial for approximation. 
We also note that the approximation at the sample points  $\dtn^{\mathrm{guide}}(\lambda_{\ell}) $ is extremely accurate for large $N$. 
This can be expected from the paper \cite{N:64} by Newman who showed that the function $|x|$ can be approximated by rational 
functions of order $N$ with  an error $\leq 3\exp(-\sqrt{N})$ uniformly on the interval $[-1,1]$.   
This (heuristically) explains this type of fast convergence of the relative $L^2$-error that can be observed in \cref{fig:waveguide_L2}. 

 \begin{figure}[htbp]
\centering
\subfloat[ $\dtn^{\mathrm{guide}} $   ]{ \label{fig:dtn_waveguide_imag} 
\includegraphics[scale=0.75]{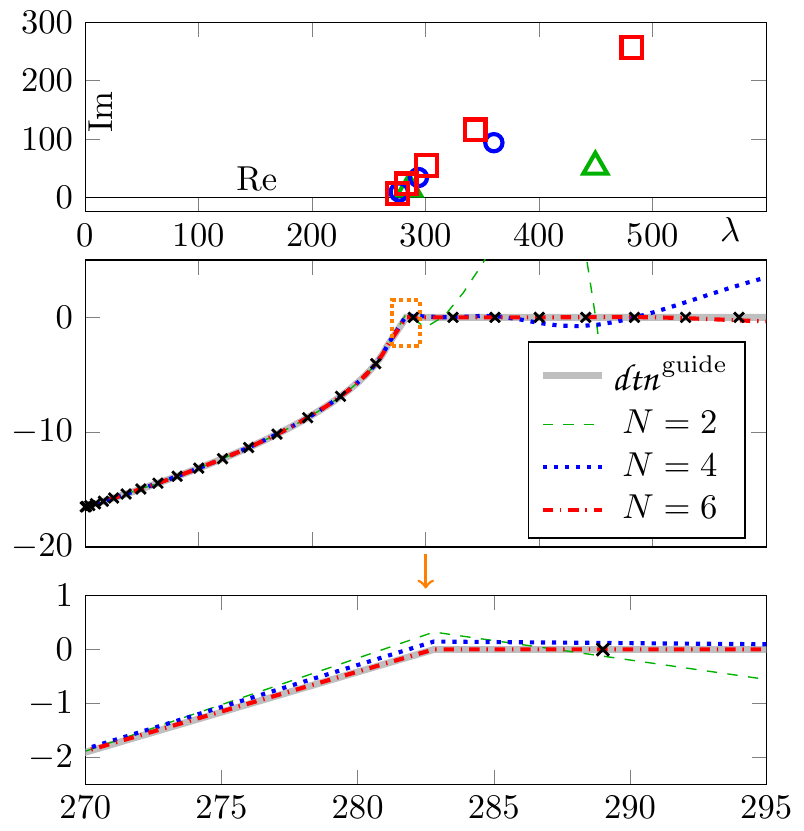}
}
\subfloat[ Relative $L^2$-error ]{ \label{fig:waveguide_L2} 
\includegraphics[scale=0.75]{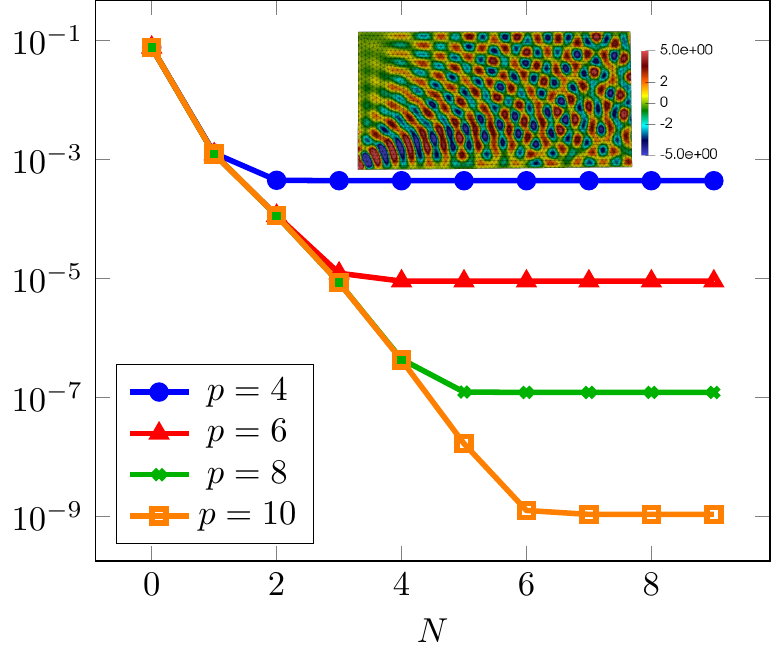}
}
	 \caption{  Left: Approximation results for imaginary part of $\dtn^{\mathrm{guide}} $ with $\wavenr = 16.5$. The sample points $\dtn^{\mathrm{guide}}(\lambda_{\ell}) $  are displayed as black crosses. In the topmost plot also some poles of $\ddtn$ are displayed. 
	   Right: Relative $L^2$ error on $\Omega_{\mathrm{int}}$. Additionally, the real part of the reference solution is shown. }
\label{fig:waveguide}
\end{figure}

\subsection{Application to helioseismology}\label{ssection:helio}

We now proceed to the main application introduced in \cref{sec:helio_models} which fueled the development of learned infinite elements, the approximimation of the VAL-C model of the solar atmosphere. At first sight it may seem that a full finite element discretization of the thin layer of validity of the VAL-C model colored  
blue color in \ref{fig:dtn-VALC}, which comprises only $1.3\%$ of the volume of the Sun, costs only 
negligible additional effort. However, the wavelength $\lambda = 2 \pi c / \omega$ decreases significantly close to and above the visible surface of the Sun such that this thin layer contains 
$2$ wavelengths in radial direction at a frequency of $7.0$ mHz, compared to $25$ wavelengths in the 
interior. In order to resolve wave oscillations, 
the local mesh size $h$ should be roughly proportional to the wavelength. Assuming a \emph{regular} meshes  
(in the sense that for each finite element domain $D$ the ratio of the largest ball contained 
in $D$ to the smallest ball containing $D$ is uniformly bounded) 
 with $h$ proportional to $\lambda$, the number of elements in the ball of radius $r$ is proportional to  
\[ I(r) :=   \int\nolimits_{ \vert x \vert_2 \leq r}{ \lambda^{-d} ~ dx}, 
\]
in $d$ dimensions. 
\cref{tab:dof_helio_comp} displays the ratio between the wavelength to be resolved in a $d$-dimensional discretization of the Sun up to the photosphere $I(a)$ and a discretization which covers an additional $2300$ km of the solar atmosphere $I(\RVALC)$ corresponding to the domain of the VAL-C model.  
Whereas in one dimension, the additional effort for discretization of the atmosphere may be regarded as
acceptable, in
three dimensions such an additional discretization of the atmosphere would at least triple the computational burden.
In view of the notoriously challenging linear systems arising from time-harmonic wave equations such an increase must absolutely be avoided.

\begin{table}[htbp]
\begin{center}
\begin{tabular}{|c|c|c|c|} \hline 
$d$           &  $1$ (radial symmetry)  & $2$ (axial symmetry) & $3$ (no symmetry)\\ \hline 
$I(\RVALC) / I(a)$ &  $1.1$ & $1.8$ & $3.2$ \\ \hline 
\end{tabular}
\end{center}
\caption{\label{tab:dof_helio_comp} Ratio of the numbers of finite elements of regular $d$-dimensional 
finite element meshes of the Sun with mesh size proportional to wavelength for different radii $\RVALC$ and $a$. 
$a$ corresponds to the radius of the photosphere (the visible surface of the Sun) whereas 
$\RVALC=a+2300$ km is the outer radius of the domain of validity of the VAL-C model of the solar atmosphere. 
These numbers demonstrate the necessity of transparent boundary conditions for the solar atmosphere. }
\end{table} 

\cref{fig:dtn_VALC}  shows the approximations $\ddtn(\lambda_{\ell}) $ of $\dtn^{\mathrm{VAL-C}}$ 
for different $N$. 
For efficiency purposes the 
distance to $\dtn^{\mathrm{VAL-C}}$  has only been minimized at a subset of the continuous eigenvalues $\lambda_{\ell} = \ell (\ell +1)/a^2$. 
This sets consists of two parts. 
The first is an equidistant sampling in $\ell$ to resolve the general behavior of $\dtn^{\mathrm{VAL-C}}$. 
The second is a fine sampling near the poles, which is readily obtained by computing the gradient of the array containing the samples of $\dtn^{\mathrm{VAL-C}}$.
The weights decay exponentially as $w_{\ell} \sim \exp(-  \ell  / 800 )$.

Using this strategy similar approximation results as for the example from \cref{ssection:jump_numexp} are obtained.
Adding more infinite element \dofs~allows to resolve an increasing number of the narrow turning points of 
$\dtn^{\mathrm{VAL-C}}$. 
The behavior of $\dtn^{\mathrm{VAL-C}}$ on the domain of interest is so completely dominated by the closely located poles that $N=5$ suffices 
to obtain an excellent approximation. \par 

 \begin{figure}[htbp]
\centering
\subfloat[ Real part. ]{ \label{fig:dtn_omega_7mhz_real}
\includegraphics[scale=0.89]{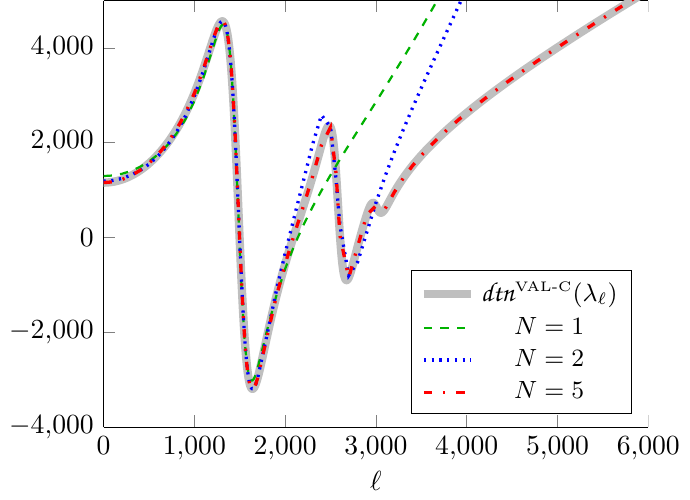}
 }
 \centering
 \subfloat[ Imaginary part. ]{ \label{fig:dtn_omega_7mhz_imag}
\includegraphics[scale=0.89]{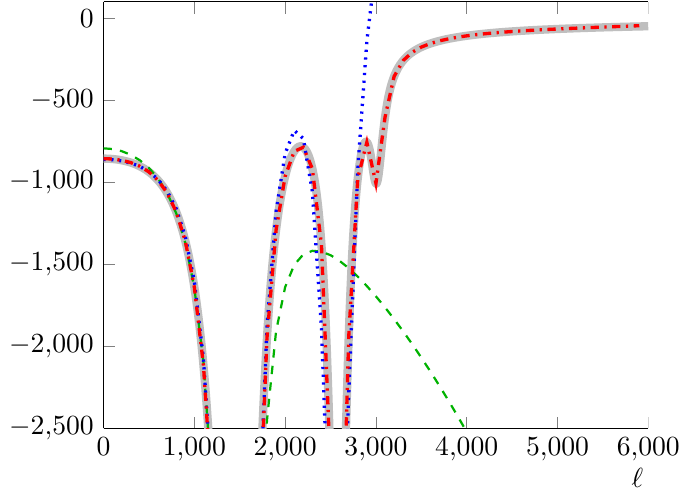}
 }
\caption{ Approximation results for problem from computational helioseismology at $7.0$ mHz.}
\label{fig:dtn_VALC}
\end{figure}

These results demonstrate that learned infinite elements allow to absorb the atmospheric behavior into a transparent boundary condition with very few DOFs which can be placed directly at the photosphere.


\section{Conclusions and outlook}
\label{sec:conclusions}

We have proposed a new type of transparent boundary condition in the form of infinite elements.  
In contrast to traditional methods which involve some explicit modelling of the exterior differential equation, we suggest 
to obtain a sparse and accurate approximation of the DtN map by `learning' the matrix elements from the 
DtN map in a preprocessing step. Our analysis applies 
to separable problems, in particular  spherically symmetric problems where the learning process only requires knowledge of the eigenvalues of the continuous DtN map, which are often given analytically, but neither discrete eigenvalues nor discrete eigenvectors are needed. Numerical test show that learned infinite elements for the Helmholtz equation outperform traditional methods such as PMLs both in terms of numbers of degrees of freedom and numbers of nonzero matrix elements. At the same time they are more flexible and applicable to stratified media with non-analytic separable coefficients. A preliminary theoretical analysis 
suggests exponential convergence with increasing infinite element degrees of freedom. 
A crucial role is played by 
the position of singularities of analytic extensions of the `eigenvalue function' of the DtN map, which is initially defined on the spectrum of the Laplace-Beltrami operator of the coupling boundary. 
This raises new theoretical questions, e.g.\ concerning potential connections to resonances. 

Let us discuss some potential extensions of our approach:
\begin{itemize}
\item \emph{Relaxation of the separation assumption \eqref{eq:separable_bvp2}.} This assumption 
might be replaced by a separation assumption with respect to operator pencils different from 
the Laplace-Beltrami and the identity operator on the coupling boundary. This could allow to 
treat systems of PDEs and separation with respect to elliptic coordinates. 
\item \emph{Infinite elements which work on intervals of wave numbers.}
Currently the learned parameters only work for one specific wave number. Therefore, they may hardly be reused. With the proposed extension libraries of optimized coefficients may be set up to allow the use of 
infinite elements without the need to implement the learning process. 
Moreover, this may enable the application of infinite elements for the solution of resonance problems. 
\item \emph{Non-separable differential equations.}
To treat such problems one may study learned infinite elements which are not of tensor product form. Here probably it cannot be avoided to set up a discrete DtN map for the learning process in some form, and the number of degrees of freedom would increase significantly. Nevertheless, we see some potential for such approaches, e.g.\ in the context of domain decomposition methods.  
\end{itemize}
Our favorable numerical results encourage further research on learned infinite elements in these and other directions in order to extend their range of applicability and improve theoretical foundation.

\appendix

\section{Derivation of $\dtn$ and reference solution for jump in exterior wavenumber}

For $\nu,a > 0$ consider the problem 
\begin{align*}
- \frac{1}{r} \frac{\partial}{\partial r} \left( r \frac{\partial u}{\partial r} \right) +  \left( \wavenr^2(r) - \frac{\nu^2}{r^2} \right)u &= 0 \quad r > a,  \\ 
u(a) &= 1,
\end{align*}
with radiation condition at infinity. For some $a \leq  \RJump < \infty$ let the wavenumber be given 
by 
\begin{equation*}
\wavenr(r) = \begin{cases}
\wavenr_{I} & r < \RJump, \\
\wavenr_{\infty} & r > \RJump,
\end{cases}
\end{equation*}
for some $\wavenr_{I}, \wavenr_{\infty} > 0$. \par 

This problem has the solution 
\begin{equation*}
u_{\nu}(r) = \begin{cases}
A_{\nu} J_{\nu}(\wavenr_{I}r) + B_{\nu} Y_{\nu}(\wavenr_I r) & r < \RJump, \\
C_{\nu} H_{\nu}^{(1)}(\wavenr_{\infty}r) & r > \RJump,
\end{cases}
\end{equation*}
with constants $A_{\nu},B_{\nu},C_{\nu} \in \mathbb{C}$ to be determined by the following three constraints:
\begin{itemize}
\item Boundary condition at $r = a$ 
\begin{equation}\label{eq:Dirichlet_a}
A_{\nu} J_{\nu}(\wavenr_I a ) + B_{\nu} Y_{\nu}(\wavenr_I a ) = 1.
\end{equation}
\item Continuity at $r = \RJump $
\begin{equation}\label{eq:Continuity_R}
A_{\nu} J_{\nu}(\wavenr_I \RJump ) + B_{\nu} Y_{\nu}(\wavenr_I \RJump ) - C_{\nu} H_{\nu}^{(1)}(\wavenr_{\infty} \RJump ) = 0.
\end{equation}
\item Continuity of derivative at $r=\RJump $
\begin{equation}\label{eq:Continuity_deriv_R}
A_{\nu} \wavenr_I  J_{\nu}^{\prime}(\wavenr_I \RJump ) + B_{\nu} \wavenr_I Y_{\nu}^{\prime}(\wavenr_I \RJump ) - C_{\nu} \wavenr_{\infty} (H_{\nu}^{(1)})^{\prime}(\wavenr_{\infty} \RJump ) = 0.
\end{equation}
\end{itemize}
Solving (\ref{eq:Dirichlet_a}) for $A_{\nu}$ gives 
\begin{equation}\label{eq:A}
A_{\nu} = \frac{1-B_{\nu} Y_{\nu}(\wavenr_I a ) }{ J_{\nu}(\wavenr_I a ) }.
\end{equation}
Using this equation to eliminate $A_{\nu}$ from \ref{eq:Continuity_R} and \cref{eq:Continuity_deriv_R} leads to the linear system 
\begin{equation*}
 \begin{bmatrix}
 \frac{J_{\nu}(\wavenr_I a ) Y_{\nu}(\wavenr_I \RJump ) - Y_{\nu}(\wavenr_I a ) J_{\nu}(\wavenr_I \RJump ) }{ J_{\nu}(\wavenr_I a ) }   & - H_{\nu}^{(1)}(\wavenr_{\infty} \RJump )     \\
 \frac{ Y_{\nu}^{\prime}(\wavenr_I \RJump ) J_{\nu}(\wavenr_I a ) - Y_{\nu}(\wavenr_I a ) J_{\nu}^{\prime}(\wavenr_I \RJump )   }{ J_{\nu}(\wavenr_I a )  }   &  -\frac{\wavenr_{\infty}}{\wavenr_I} (H_{\nu}^{(1)})^{\prime}(\wavenr_{\infty} \RJump ) \\
\end{bmatrix}
 \begin{bmatrix}
 B_{\nu} \\
 C_{\nu}
 \end{bmatrix}
 = 
 \begin{bmatrix}
  - \frac{J_{\nu}(\wavenr_I \RJump )}{ J_{\nu}(\wavenr_I a ) } \\
  - \frac{ J_{\nu}^{\prime}(\wavenr_I \RJump )}{ J_{\nu}(\wavenr_I a ) }
 \end{bmatrix}.
\end{equation*}
Denote the matrix in this equation as $M_{\nu}$. We have 
\begin{align*}
\det(M_{\nu}) &= -\frac{\wavenr_{\infty}}{\wavenr_I} \frac{(H_{\nu}^{(1)})^{\prime}(\wavenr_{\infty} \RJump )}{ J_{\nu}(\wavenr_I a )} \left[ J_{\nu}(\wavenr_I a ) Y_{\nu}(\wavenr_I \RJump ) - Y_{\nu}(\wavenr_I a ) J_{\nu}(\wavenr_I \RJump ) \right] \\
 & + \frac{ H_{\nu}^{(1)}(\wavenr_{\infty} \RJump ) }{J_{\nu}(\wavenr_I a )} \left[ Y_{\nu}^{\prime}(\wavenr_I \RJump ) J_{\nu}(\wavenr_I a ) - Y_{\nu}(\wavenr_I a ) J_{\nu}^{\prime}(\wavenr_I \RJump )  \right].
\end{align*}
Hence, 
\begin{equation*}
M^{-1}_{\nu} =  \frac{1}{\det(M_{\nu})}   
\begin{bmatrix}
-\frac{\wavenr_{\infty}}{\wavenr_I} (H_{\nu}^{(1)})^{\prime}(\wavenr_{\infty} \RJump )    &  H_{\nu}^{(1)}(\wavenr_{\infty} \RJump )     \\
 \frac{ - \left[ Y_{\nu}^{\prime}(\wavenr_I \RJump ) J_{\nu}(\wavenr_I a ) - Y_{\nu}(\wavenr_I a ) J_{\nu}^{\prime}(\wavenr_I \RJump )  \right]  }{ J_{\nu}(\wavenr_I a )  }   &  \frac{J_{\nu}(\wavenr_I a ) Y_{\nu}(\wavenr_I \RJump ) - Y_{\nu}(\wavenr_I a ) J_{\nu}(\wavenr_I \RJump ) }{ J_{\nu}(\wavenr_I a ) }  \\
\end{bmatrix}.
\end{equation*}
The solution of the linear system is given by 
\begin{equation}\label{eq:B}
B_{\nu} =  \frac{1}{\det(M_{\nu})} \left[ \frac{\wavenr_{\infty}}{\wavenr_I} (H_{\nu}^{(1)})^{\prime}(\wavenr_{\infty} \RJump ) \frac{J_{\nu}(\wavenr_I \RJump )}{ J_{\nu}(\wavenr_I a ) } -    H_{\nu}^{(1)}(\wavenr_{\infty} \RJump ) \frac{ J_{\nu}^{\prime}(\wavenr_I \RJump )}{ J_{\nu}(\wavenr_I a )  } \right]   
\end{equation}
and 
\begin{align}
C_{\nu} = \frac{1}{\det(M_{\nu})} \Big[  &  \left[ Y_{\nu}^{\prime}(\wavenr_I \RJump ) J_{\nu}(\wavenr_I a ) - Y_{\nu}(\wavenr_I a ) J_{\nu}^{\prime}(\wavenr_I \RJump )  \right] \frac{J_{\nu}(\wavenr_I \RJump )}{ J_{\nu}(\wavenr_I a )^2 }  \label{eq:C} \\ 
 & -  \left[ J_{\nu}(\wavenr_I a ) Y_{\nu}(\wavenr_I \RJump ) - Y_{\nu}(\wavenr_I a ) J_{\nu}(\wavenr_I \RJump ) \right] \frac{ J_{\nu}^{\prime}(\wavenr_I \RJump )}{ J_{\nu}(\wavenr_I a )^2 } \nonumber  \Big].
\end{align}
This yields the DtN function 
\begin{equation}\label{eq:dtn_jump_formula}
\zeta(\nu) = -\frac{\partial u_{\nu}(a)}{\partial r } = - \wavenr_I \left[ A_{\nu} J_{\nu}^{\prime}(\wavenr_I a )  + B_{\nu} Y_{\nu}^{\prime}(\wavenr_I a )  \right].
\end{equation}

A reference solution for a sound-soft scattering problem in the exterior of a disk with radius $\RScatter $ can easily be obtained from the previous computations. 
Firstly, the radius of the truncation boundary has to be replaced by the radius of the scatterer, i.e. set $a = \RScatter$. 
Assume that a plane wave $g = \exp(i k_I x)$ is incident on the disk. 
It is well-known that $g$ admits an expansion into orthogonal eigenfunctions of the Laplace-Beltrami operator on the boundary of the scatterer: 
\[ g = J_{0}( \wavenr_I r)  + \sum\limits_{\ell = 1} 2 i^{\ell} J_{\ell}(\wavenr_I r) \cos(\ell \varphi),   \] 
using standard polar coordinates $x = r \cos(\varphi)$ and $y = r \sin(\varphi)$.
Hence, by separation of variables the solution is given by 
\begin{equation}\label{eq:jump_refsol}
u(r,\varphi)  = u_{0}(r) J_{0}( \wavenr_I \RScatter)  + \sum\limits_{\ell = 1}^{\infty} u_{\ell}(r) 2 i^{\ell} J_{\ell}(\wavenr_I \RScatter) \cos(\ell \varphi).
\end{equation}

\subsection{Case of no jump}

In the special case 
\begin{equation}\label{eq:jump_equal_wavenumber}
\wavenr_{\infty} = \wavenr_{I} 
\end{equation}
there is no jump and the following computation shows that the $\dtn^{\mathrm{hom}}$ function of the homogeneous medium is recovered from \cref{eq:dtn_jump_formula}. 
A straight-forward calculation using \cref{eq:jump_equal_wavenumber}  yields:
\begin{equation*}
\det(M_{\nu})  = \frac{ H_{\nu}^{(1)}(\wavenr a ) }{  J_{\nu}(\wavenr a ) } W \{ J_{\nu}(\wavenr \RJump), Y_{\nu}(\wavenr \RJump ) \}, 
\end{equation*} 
where $W \{ J_{\nu}(\wavenr \RJump), Y_{\nu}(\wavenr \RJump ) \} =  J_{\nu}(\wavenr \RJump) Y_{\nu}^{\prime}(\wavenr \RJump ) - J_{\nu}^{\prime}(\wavenr \RJump)  Y_{\nu}(\wavenr \RJump )$ is the Wronskian. 
A similar calculation gives
\begin{equation*}
B_{\nu}  =  \frac{1}{\det(M_{\nu})} \frac{i}{ J_{\nu}(\wavenr a ) }  W \{ J_{\nu}(\wavenr \RJump), Y_{\nu}(\wavenr \RJump ) \}  = \frac{i}{ H_{\nu}^{(1)}(\wavenr a ) }.  
\end{equation*}
As 
\begin{equation*}
1-B_{\nu} Y_{\nu}(\wavenr_I a )  = \frac{ J_{\nu}(\wavenr a ) }{H_{\nu}^{(1)}(\wavenr a )   } 
\end{equation*}
it follows from \cref{eq:A} that $A_{\nu} = 1/H_{\nu}^{(1)}(\wavenr a )$. 
Inserting this into the ansatz for the solution yields $ u_{\nu}(r) =  H^{(1)}_{\nu}(\wavenr r) / H_{\nu}^{(1)}(\wavenr a ) $ for $r < \RJump$ and hence  $ \dtn^{\mathrm{jump}}(\lambda) =  \dtn^{\mathrm{hom}}(\lambda)  $.
The reference solution \cref{eq:jump_refsol} then takes the explicit form 
\[ u(r,\varphi) = \frac{ H^{(1)}_{0}(k r) }{ H^{(1)}_{0}(k \RScatter)  }  J_{0}( \wavenr \RScatter)  + \sum\limits_{\ell = 1}^{\infty} \frac{ H^{(1)}_{\ell}(k r) }{ H^{(1)}_{\ell}(k \RScatter)  }  2 i^{\ell} J_{\ell}(\wavenr \RScatter) \cos(\ell \varphi)    . \]

\section{Implementational details}\label{section:impl_details}

In this section some suggestions on how to solve the optimization problem introduced in \cref{ssection:minimization_problem} are provided. 
Although we had good success with the technique oulined below other approaches may be possible.
The \cref{ssection:impl_fixed_N} describes how the optimization problem for one specific $N$ is solved and \cref{ssection:successive_learning} then outlines the whole pipeline. 

\subsection{Solving the optimization problem for a fixed $N$}\label{ssection:impl_fixed_N}

The objective function for the minimization problem \cref{eq:misfit}-\cref{eq:minimization problem} can be written in the form 

\begin{equation}\label{eq:misfit_LM}
	J(\dstiffr,\dmassr) = \frac{1}{2} \sum\nolimits_{\ell}{    w_{\ell}^2  \vert f_{\ell}  \vert^2},
\end{equation}
with $f_{\ell}(\dstiffr,\dmassr) := \dtn(\lambda_{\ell}) -   \ddtn( \lambda_{\ell}   )$. 
Here we consider the case where $\ddtn( \lambda_{\ell}   )$ is given by the reduced ansatz \cref{eq:zeta_learned_diag}. 
The general case can be treated along the same lines, yet requires some more computations. 
The Levenberg-Marquardt algorithm is one of the commonly employed methods for solving non-linear least-squares problems of the form \cref{eq:misfit_LM}. 
Various open source implementations are readily available. 
These implementations usually require from the user the definition of the parameters, here the entries of $\dstiffr$ and $\dmassr$ to be optimized, 
the cost functions $f_{\ell}$ and their gradients with respect to these parameters. 
\begin{itemize}
\item Only the matrix entries of  $\dstiffr$ and $\dmassr$ that are not known a priori should be considered as parameters. So for ansatz \cref{eq:zeta_learned_diag} these are given by 
	\[  \{ \dstiffr_{0j} \}_{j=0}^{N} \cup \{ \dstiffr_{j0} \}_{j=1}^{N} \cup \{ \dstiffr_{jj} \}_{j=1}^{N} \cup \{ \dmassr_{0j} \}_{j=0}^{N}.
   \]
\item A potential difficulty could arise from the fact that $\dstiffr$ and $\dmassr$ are complex matrices while the available implementation of the Levenberg-Marquardt algorithm might be limited to 
real parameters. 
However, this is easily resolve by splitting into real and imaginary parts 
		\[ \dstiffr_{nm} = \Re \dstiffr_{nm} + i \Im \dstiffr_{nm} \]   
and treating $\Re \dstiffr_{nm}$ and $\Im \dstiffr_{nm}$ as two separate real parameters.
\end{itemize}
The gradients of the cost functions can easily be calculated analytically: 
\begin{equation*}
	\frac{\partial f_{\ell}}{\partial \Re \dstiffr_{00}} = -1, \quad 	\frac{\partial f_{\ell}}{\partial \Re \dmassr_{00}} = - \lambda_{\ell}. 
\end{equation*}
For $n \geq 1$: 
\begin{equation*}
\frac{\partial f_{\ell}}{\partial \Re \dstiffr_{0n}} = \frac{ \dstiffr_{n0} + \lambda_{\ell}}{ \dstiffr_{nn} + \lambda_{\ell}}, \; 
	\frac{\partial f_{\ell}}{\partial \Re \dmassr_{0n}} = \lambda_{\ell} \frac{ \left( \dstiffr_{n0} + \lambda_{\ell} \right)}{ \dstiffr_{nn} + \lambda_{\ell}}, \; 
	\frac{\partial f_{\ell}}{\partial \Re \dstiffr_{n0}} = \frac{ \dstiffr_{0n} + \lambda_{\ell} \dmassr_{0n} }{ \dstiffr_{nn} + \lambda_{\ell}}, \; 
\end{equation*}
and 
\begin{equation*}
	\frac{\partial f_{\ell}}{\partial \Re \dstiffr_{nn}} = - \frac{  \left( \dstiffr_{0n} + \lambda_{\ell} \dmassr_{0n} \right)   \left(\dstiffr_{n0} + \lambda_{\ell} \right) }{ \left( \dstiffr_{nn} + \lambda_{\ell}\right)^2 }. 
\end{equation*}
The derivatives with respect to the imaginary part can be obtained from 
\begin{equation*}
\frac{\partial f_{\ell}}{\partial \Im \dstiffr_{nm}}  = i \frac{\partial f_{\ell}}{\partial \Re \dstiffr_{nm}}.
\end{equation*}
Please note that the cost functions are complex-valued, so depending on the implementation they may have to be split into real and imaginary parts as well.

\subsection{Successive learning}\label{ssection:successive_learning}

An algorithm for the whole pipeline starting with the computation of $\dtn(\lambda_{\ell})$ is given in \cref{alg:learning_matrices_loop}.

\begin{algorithm}
\caption{Successive learning of matrices $\dstiffr$ and $\dmassr$ }
\label{alg:learning_matrices_loop}
\begin{algorithmic}[1]
\STATE{Define $N_{\max}$, $L_{\max}$}
\FOR{$\ell=0:L_{\max}$ }
\STATE{Compute $\dtn(\lambda_{\ell})$ by solving \cref{eq:ODE_dtn_numbers}-\cref{eq:bc_dtn_numbers}.}
\ENDFOR
\STATE{Initialize $\tilde{\dstiffr} \in \mathbb{C},\tilde{\dmassr}\in \mathbb{C}$ randomly.}
\FOR{$N=0:N_{\max}$ }
	\STATE{ Obtain  $\dstiffr,\dmassr \in \mathbb{C}^{(N+1) \times (N+1)}$ by solving \cref{eq:misfit_LM} as described in \cref{ssection:impl_fixed_N} using  $\tilde{\dstiffr}, \tilde{\dmassr}$ as initial guess. }
	\STATE{Prepare new initial guess: $\tilde{\dstiffr},\tilde{\dmassr} \in \mathbb{C}^{(N+2) \times (N+2)}$.   }
	\STATE{ Initialize upper block of  $ \tilde{\dstiffr},\tilde{\dmassr} $ by  $\dstiffr,\dmassr$.   }
	\STATE{ Set $ \tilde{\dstiffr}_{N+1,N+1} $ to one or a good guess for the poles if available. } 
	\STATE{ Fill $\tilde{\dstiffr}_{N+1,0},\tilde{\dstiffr}_{0,N+1}$ and $\tilde{\dmassr}_{0,N+1}$ with small (nonzero) random numbers.  } 
\ENDFOR
\RETURN Learned matrices $\dstiffr,\dmassr $ for $N=0:N_{\max}$.
\end{algorithmic}
\end{algorithm}

A few additional remarks are given below. 
\begin{itemize}
\item Often there is an analytic reference solution for $\dtn(\lambda_{\ell})$ available so that the solution of the ODEs can be skipped.
\item The purpose of the loop over $N$ in \cref{alg:learning_matrices_loop} is to provide a good initial guess for the optimization. 
Basically, the learned matrices from step $N$ are reused as an initial guess for step $N+1$. 
With this technique the learned $\ddtn(\lambda_{\ell})$-function in step $N+1$ starts at the value of the minimizer of step $N$ plus the additional term 
		\[  \frac{( \tilde{\dstiffr}_{0,N+1} + \lambda_{\ell} \tilde{\dmassr}_{0,N+1}) ( \tilde{\dstiffr}_{N+1,0} + \lambda_{\ell}  )  }{  \tilde{\dstiffr}_{N+1,N+1} + \lambda_{\ell}  }.   \]
In order to prevent the cost function from starting too far away from the minimizer of the previous step this contribution should be small. 
This is the motivation for filling $\tilde{\dstiffr}_{N+1,0},\tilde{\dstiffr}_{0,N+1}$ and $\tilde{\dmassr}_{0,N+1}$ with small random numbers. 
These should be nonzero to avoid getting stuck at the minimizer of step $N$. 
If a good guess for the poles is available then setting $ -\tilde{\dstiffr}_{N+1,N+1} $ equal to one of these poles can be beneficial. 
\end{itemize}

\section{Additional plots for numerical experiments}

\cref{fig:dtn_omega_inf_imag}, \cref{fig:dtn_kinf12}, \cref{fig:dtn_kinf20}, \cref{fig:waveguide_im} and \cref{tab:opt_performance} show some additional aspects of the numerical experiments described in the main article.

\begin{table}
\begin{center}
\begin{tabular}{ccccc}
  \toprule
$N$ & iter & cost & gradient  & time[s]       \\
  \midrule
0  &     \colora{ 3} / \colorb{ 3  } &  \colora{$8.3 \cdot 10^{5}$ } /  \colorb{$8.3 \cdot 10^{5}$  } & \colora{$7.9 \cdot 10^{4}$ } /  \colorb{$7.9 \cdot 10^{4}$     }& \colora{$1.8 \cdot 10^{-4}$ } /  \colorb{$2.7 \cdot 10^{-4}$ }    \\
1   &  \colora{34 } / \colorb{16  } &  \colora{$1.3 \cdot 10^{2}$} /  \colorb{$1.3 \cdot 10^{2}$ } & \colora{$2.9 \cdot 10^{2}$ } /  \colorb{$1.3 \cdot 10^{2}$  }& \colora{ $8.3 \cdot 10^{-3}$} /  \colorb{$6.3 \cdot 10^{-4}$  }    \\
2    &    \colora{10} / \colorb{ 16   } &  \colora{  $6.1 \cdot 10^{-2}$} /  \colorb{ $6.1 \cdot 10^{-2}$    } & \colora{$7.5  \cdot 10^{-1 }$} /  \colorb{  $3.8  \cdot 10^{2 }$ }& \colora{ $8.3  \cdot 10^{-3 }$} /  \colorb{ $1.0  \cdot 10^{-3 }$ }    \\
3  &     \colora{ 9 } / \colorb{ 496  } &  \colora{$2.9 \cdot 10^{-5}$} /  \colorb{$2.9 \cdot 10^{-5}$ } & \colora{$1.1  \cdot 10^{-1 }$} /  \colorb{$5.8  \cdot 10^{0 }$  }& \colora{$1.7  \cdot 10^{ -2}$} /  \colorb{ $4.7  \cdot 10^{ -2}$ }    \\
4  &     \colora{12} / \colorb{ 4620  } &  \colora{$1.4 \cdot 10^{-8}$ } /  \colorb{ $1.4 \cdot 10^{-8}$ } & \colora{$4.5  \cdot 10^{-3 }$} /  \colorb{ $3.5  \cdot 10^{-1 }$  }& \colora{ $4.9  \cdot 10^{ -2}$} /  \colorb{$5.9  \cdot 10^{ -1}$   }    \\
5  &     \colora{20  } / \colorb{ 177  } &  \colora{ $7.2 \cdot 10^{-12}$} /  \colorb{$7.2 \cdot 10^{-12}$  } & \colora{$4.6  \cdot 10^{-3 }$ } /  \colorb{ $3.8  \cdot 10^{-4 }$ }& \colora{  $1.6  \cdot 10^{-1 }$} /  \colorb{  $3.0  \cdot 10^{-2}$  }    \\
6  &     \colora{ 1491 } / \colorb{ 293 } &  \colora{ $3.7 \cdot 10^{-15}$  } /  \colorb{ $3.7 \cdot 10^{-15}$ } & \colora{$2.9  \cdot 10^{-3 }$ } /  \colorb{$3.8  \cdot 10^{-3}$   }& \colora{  $2.9  \cdot 10^{1 }$} /  \colorb{ $8.1  \cdot 10^{-2 }$ }    \\
\bottomrule
\end{tabular}
\end{center}
\caption{Performance of the Levenberg-Marquardt algorithm for solving the minimization for the Helmholtz equation with $a=1$ and $\wavenr = 16$ from \cref{ssection:diag_and_poles}. Results for the dense / reduced ansatz are given in \colora{red} / \colorb{blue}. The cost denotes the value of the objective function at the minimizer, similarly for the gradient.  }
\label{tab:opt_performance}
\end{table}

\begin{figure}[htbp]
\centering
\subfloat[ $\wavenr_{\infty} = 16 $  ]{ \label{fig:k_inf_16_imag}
\includegraphics[scale=0.75]{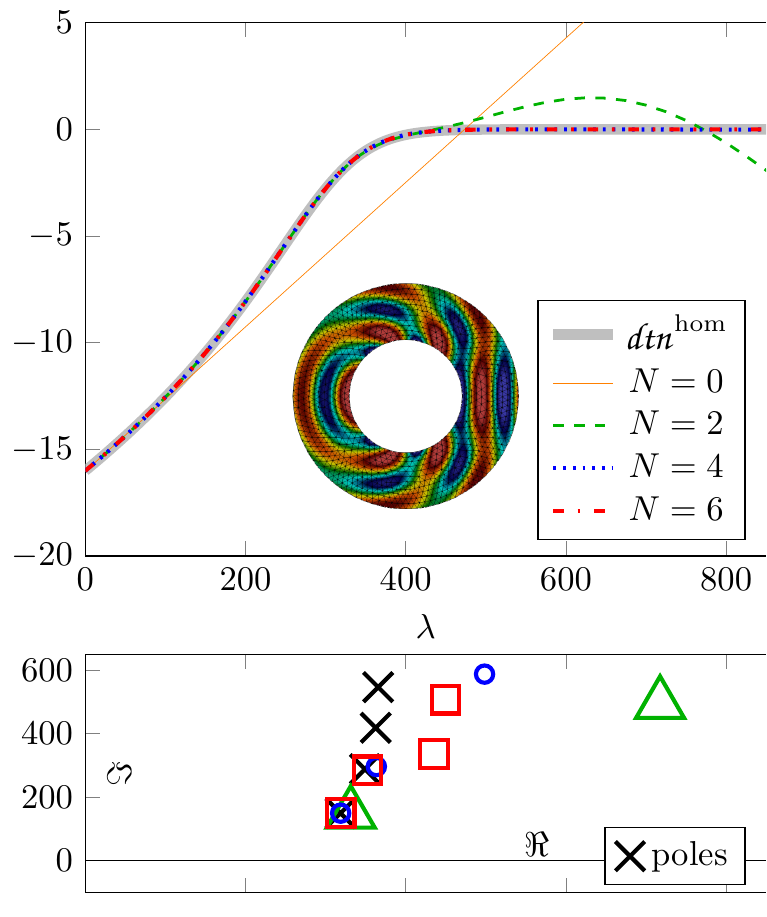}
 } 
\subfloat[ $\wavenr_{\infty} = 8$  ]{ \label{fig:k_inf_8_imag}
\includegraphics[scale=0.75]{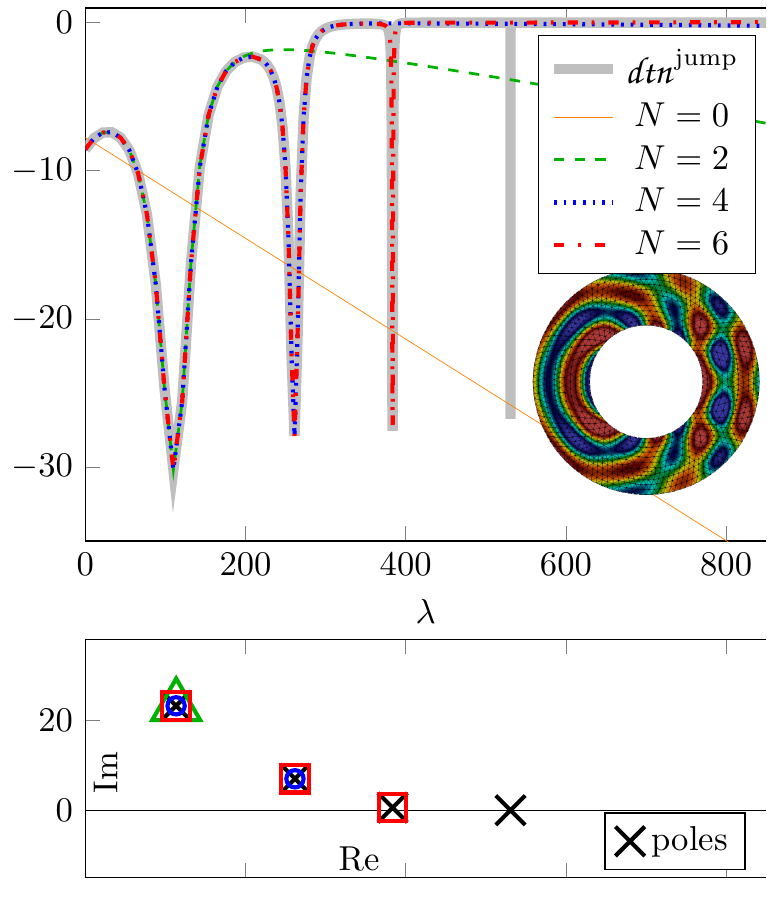}
 }
\caption{  Approximation results for the imaginary part of $\dtn$ belonging to a discontinuous exterior wavespeed which jumps from $\wavenr_{I} = 16$ to $\wavenr_{\infty}$. 
	  In the lower panel we also compare the poles of $\dtn$ (black crosses)  with the poles of $\ddtn$ (color according to legend). Only the poles of $\ddtn$ close to
	 the real axis are displayed.}
\label{fig:dtn_omega_inf_imag}
\end{figure}

 \begin{figure}[htbp]
\centering
\subfloat[Real part. ]{ \label{fig:dtn_kinf_12_real} 
\includegraphics[scale=0.75]{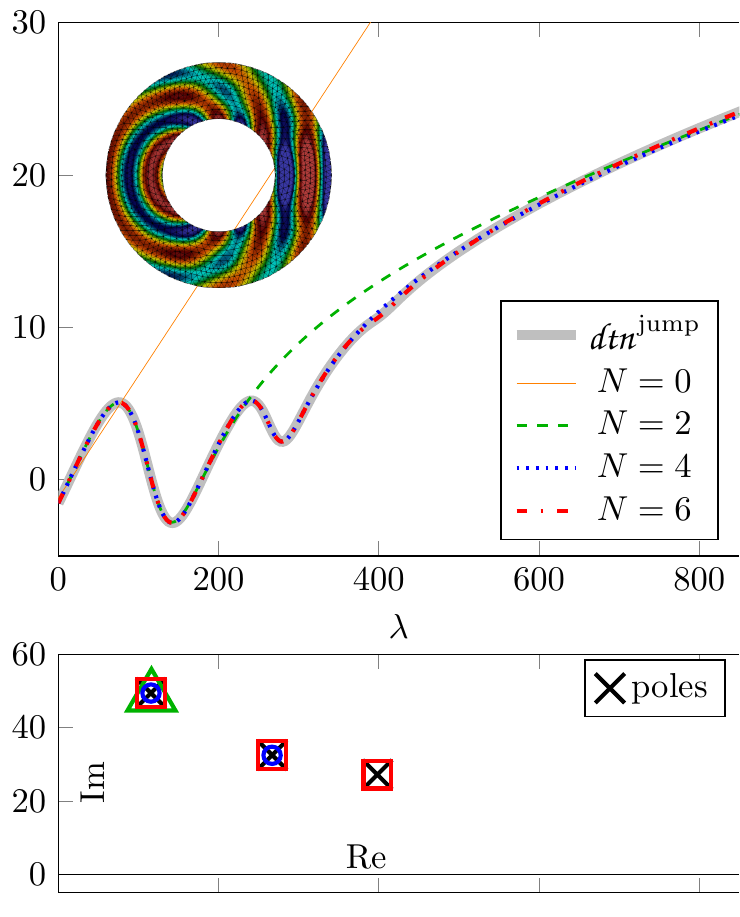}
}
\subfloat[Imaginary part. ]{ \label{fig:kinf_12_imag}
\includegraphics[scale=0.75]{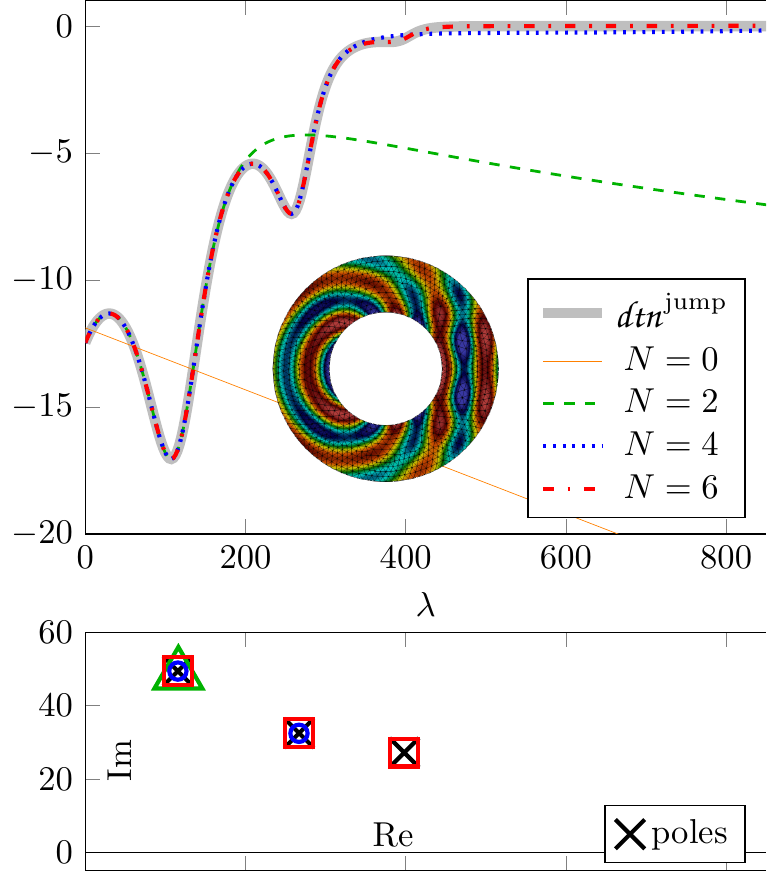}
 }
\caption{  Approximation results for the imaginary part of $\dtn$ belonging to a discontinuous exterior wavespeed which jumps from $\wavenr_{I} = 16$ to $\wavenr_{\infty}=12$.  }
\label{fig:dtn_kinf12}
\end{figure}

 \begin{figure}[htbp]
\centering
\subfloat[Real part. ]{ \label{fig:dtn_kinf_20_real} 
\includegraphics[scale=0.75]{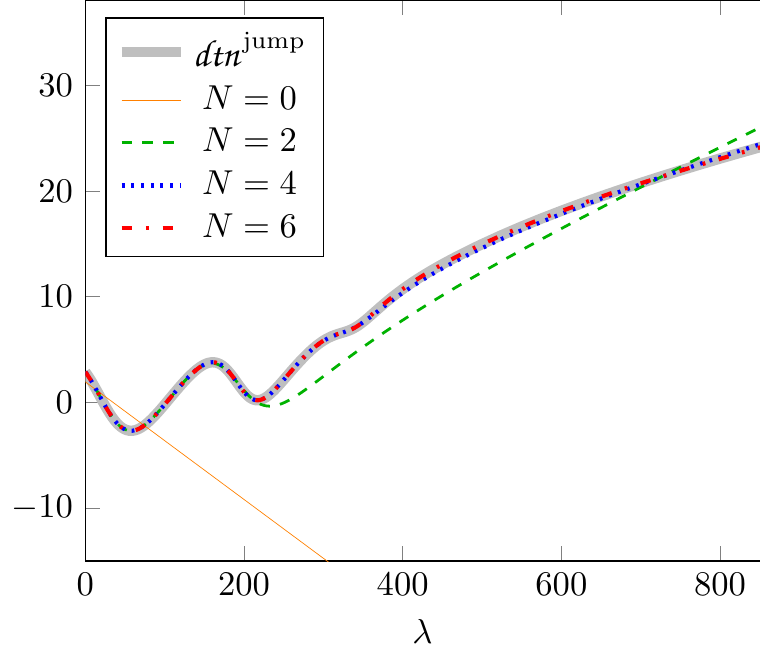}
}
\subfloat[Imaginary part. ]{ \label{fig:dtn_kinf_20_imag}
\includegraphics[scale=0.75]{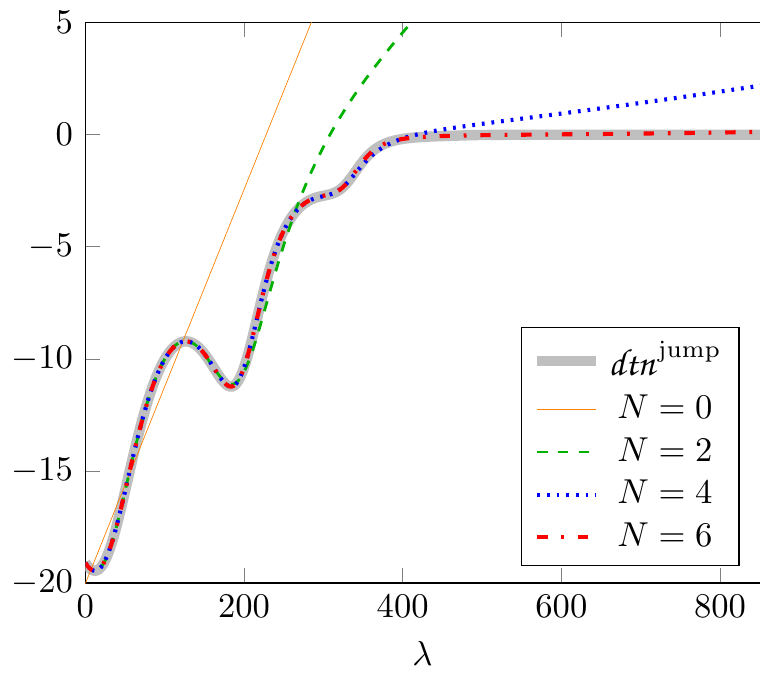}
 }
\caption{  Approximation results for the imaginary part of $\dtn$ belonging to a discontinuous exterior wavespeed which jumps from $\wavenr_{I} = 16$ to $\wavenr_{\infty}=20$.  }
\label{fig:dtn_kinf20}
\end{figure}

 \begin{figure}[htbp]
\centering
\includegraphics[scale=1.0]{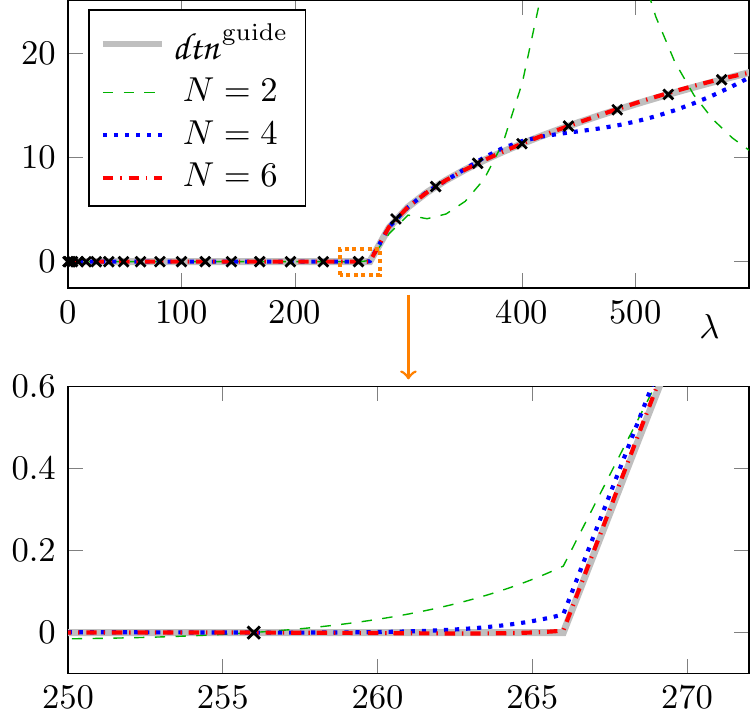}
\caption{ Approximation results for real part of $\dtn^{\mathrm{guide}} $ with $\wavenr = 16.5$. The sample points $\dtn^{\mathrm{guide}}(\lambda_{\ell}) $  are displayed as black crosses. }
\label{fig:waveguide_im}
\end{figure}



\section*{Acknowledgments}
This research was funded by the Deutsche Forschungsgemeinschaft (DFG, German Research Foundation) -- Project-ID 432680300 -- SFB 1456.

J.P.\  is  a  member  of  the  International  Max  Planck Research School for Solar System Science at the University of G\"ottingen.\\[1ex]
\textbf{Author contributions:} T.H.\ initiated the research on learned infinite elements. All authors designed and performed the research.  All implementations and numerical computations were performed by J.P.\ using the finite element library \texttt{Netgen/NGSolve}
and the Levenberg-Marquardt algorithm from the \texttt{Ceres-solver}. J.P.\ also conceived \cref{ssection:convergence} and drafted the paper. 
All authors contributed to the final manuscript.

\bibliographystyle{siamplain}
\bibliography{references}
\end{document}